\newcommand{\calU}{\mathcal{U}}
\newcommand{\calE}{\mathcal{E}}
\newcommand{\calF}{\mathcal{F}}
\newcommand{\calB}{\mathcal{B}}
\newcommand{\bbR}{\mathbb{R}}
\newcommand{\bbC}{\mathbb{C}}
\newcommand{\tr}{\operatorname{tr}}
\newcommand{\normg}[1]{\| {#1} \|_{\textrm{glob}}}
\newtheorem{example}{Example}
\newtheorem{remark}{Remark}
\title{Localization theorems for nonlinear eigenvalue problems\thanks{
Supported in part by the Sloan Foundation.
}}
\author{David Bindel\thanks{Department of Computer Science, Cornell
    University, Ithaca, NY 14850 ({\tt bindel@cs.cornell.edu})} \and
    Amanda Hood\thanks{Center for Applied Mathematics, Cornell
    University, Ithaca, NY 14850 ({\tt ah576@cornell.edu})}}
\begin{document}

\maketitle

\begin{abstract}
  Let $T : \Omega \rightarrow \bbC^{n \times n}$ be a matrix-valued
  function that is analytic on some simply-connected domain $\Omega
  \subset \bbC$.  A point $\lambda \in \Omega$ is an eigenvalue if the
  matrix $T(\lambda)$ is singular.  In this paper, we describe new
  localization results for nonlinear eigenvalue problems that
  generalize Gershgorin's theorem, pseudospectral inclusion theorems,
  and the Bauer-Fike theorem.  We use our results to analyze three
  nonlinear eigenvalue problems: an example from delay differential
  equations, a problem due to Hadeler, and a quantum resonance
  computation.
\end{abstract}

\begin{keywords} 
nonlinear eigenvalue problems,
pseudospectra,
Gershgorin's theorem, perturbation theory
\end{keywords}

\begin{AMS}
15A18, 
15A42, 
15A60, 
30E10  
\end{AMS}

\pagestyle{myheadings}
\thispagestyle{plain}
\markboth{D. Bindel and A. Hood}{Localization for 
      nonlinear eigenvalue problems}

\section{Introduction}
\label{sec-intro}

In this paper, we study the nonlinear eigenvalue problem
\begin{equation} \label{eq-nep}
  T(\lambda) v = 0, \quad v \neq 0,
\end{equation}
where $T : \Omega \rightarrow \bbC^{n \times n}$ is analytic on a
simply-connected domain $\Omega \subset \bbC$.  Problem~\eqref{eq-nep}
occurs in many
applications~\cite{Betcke:2013:NLEVP,Mehrmann:2005:NEP}, often from
applying transform methods to analyze differential and difference
equations.  The best-studied nonlinear eigenvalue problems are those
for which $T$ is polynomial in
$\lambda$~\cite{Bini:2012:LEMP,Gohberg:2009:MP,Higham:2003:BEMP}, and
particularly those that are quadratic in
$\lambda$~\cite{Sleijpen:1996:QEN,Tisseur:2001:QEP}.  More general
nonlinear eigenvalue problems that involve algebraic or transcendental
matrix functions are prevalent in models with
delay~\cite{Michiels:2007:SST} or
radiation~\cite{Igarashi:1995:NCE,Tausch:2000:FMP,Yang:2005:SLS,Yuan:2006:PBG}.

In this paper, we consider {\em localization results} that define
regions in which any eigenvalues must lie.  Localization regions such
as pseudospectra~\cite{Trefethen:2005:SP} and Gershgorin
disks~\cite{Varga:2004:GAC} are widely used in the analysis of
ordinary eigenvalue problems.  In error analysis, localization results
bound how much numerically computed eigenvalues are affected by
roundoff and other approximations, particularly when the approximation
error is not tiny or the eigenvalues in question are ill-conditioned.
Localization regions that are crude but easy to compute are used in
linear stability of dynamical systems, as an easy way to see that a
matrix has no eigenvalues in the right half plane or outside the unit
disk.  Crude localization results are also used to find good shifts
for spectral transformations commonly used with iterative
eigensolvers.  But though localization is as useful for 
nonlinear eigenvalue problems
as for linear eigenvalue problems, little has been done to
adapt standard localization results to the nonlinear case.

Just as one can localize eigenvalues of an ordinary problem by
localizing zeros of a characteristic polynomial, a standard approach
to localizing eigenvalues of $T(z)$ is to localize the zeros of the
scalar function $\det T(z)$.  Apart from some work in the context of
delay differential equations~\cite{Jarlebring:2008:SDD}, we are not
aware of any efforts to extend localization results that work directly
with the matrix, such as Gershgorin's theorem or the Bauer-Fike
theorem, to the general nonlinear case.  However, related work has
been done for certain instances of~\eqref{eq-nep}.  For polynomial
eigenvalue problems in particular, several researchers have explored
perturbation
theory~\cite{Ahmad:2011:PAC,Bora:2009:SEC,Chu:2003:PEMPBF,Dedieu:2003:PTHPEP,Higham:2007:BEP,Tisseur:2000:BECPEP}
and localization theorems that generalize results for scalar
polynomials (e.g. Pellet's
theorem)~\cite{Bini:2012:LEMP,Melman:2013:GVPMP}, though these results
are of limited use outside the polynomial case.  Similarly, research
into pseudospectra for nonlinear
problems~\cite{Cullum:2001:PAN,Green:2006:PDD,Higham:2002:MPPEPACT,Michiels:2006:PSR,Michiels:2007:SST,Tisseur:2001:SPPEPA,Wagenknecht:2008:SPN}
has primarily focused on specific types of eigenvalue problems, such
as polynomial problems or problems arising from delay differential
equations.

The rest of this paper is organized as follows.  In
Section~\ref{sec-background}, we recall a useful result from the
theory of analytic matrix-valued functions and some background on
subharmonic functions.  In Section~\ref{sec-nl-gg}, we describe a
generalized Gershgorin theorem for nonlinear eigenvalue problems, and
in Section~\ref{sec-pseudospectra}, we introduce and discuss a
nonlinear generalization of pseudospectra.  We then turn to the useful
special case of linear functions with nonlinear perturbations in
Section~\ref{sec-linear}, where we describe analogues of Gershgorin's
theorem and the Bauer-Fike theorem for this case.  We illustrate the
usefulness of our bounds through some examples in
Section~\ref{sec-example}, and conclude in
Section~\ref{sec-conclusion}.

\section{Preliminaries}
\label{sec-background}

We assume throughout this paper that $\Omega \subset \bbC$ is a
simply-connected domain and $T : \Omega \rightarrow \bbC^{n \times n}$
is analytic and {\em regular}, i.e. $\det(T(z)) \not \equiv 0$.  For
$T$ regular, the zeros of $\det(T(z))$ are a discrete set with no
accumulation points in $\Omega$.  We call $\lambda \in \Omega$ a
eigenvalue with multiplicity $m$ if $\det(T(z))$ has a zero of order
$m$ at $\lambda$.  The set of all eigenvalues of the matrix-valued
function $T$ is the spectrum $\Lambda(T)$.  Note that, for simplicity,
we have deliberately restricted our attention to finite eigenvalues.
As with standard eigenvalue problems, when we count eigenvalues in a
region, we always count multiplicity.  If $\Gamma \subset \bbC$ is a
simple closed contour and $T(z)$ is nonsingular for all $z \in
\Gamma$, the number of eigenvalues inside $\Gamma$ is given by the
winding number
\[
  W_{\Gamma}(\det T(z)) =
  \frac{1}{2\pi i} \int_{\Gamma} \left[ \frac{d}{dz} \log \det(T(z))
    \right] \,dz =
  \frac{1}{2\pi i} \int_{\Gamma} \tr\left( T(z)^{-1} T'(z) \right) \,dz.
\]
The following counting argument based on the winding number underpins
most of the results in this paper.

\begin{lemma}
\label{th-A-sE}
  Suppose $T : \Omega \rightarrow \bbC^{n \times n}$ and $E : \Omega
  \rightarrow \bbC^{n \times n}$ are analytic,
  and that $\Gamma \subset \Omega$ is a simple closed contour.  If
  $T(z) + sE(z)$ is nonsingular for all $s \in [0,1]$ and all $z \in
  \Gamma$, then $T$ and $T + E$ have the same number of eigenvalues
  inside $\Gamma$, counting multiplicity.
\end{lemma}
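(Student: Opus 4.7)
The plan is to apply a standard homotopy/Rouch\'e-style argument using the winding-number formula displayed just before the statement. For each $s \in [0,1]$, let $T_s(z) := T(z) + sE(z)$, and define
\[
  N(s) := \frac{1}{2\pi i} \int_{\Gamma} \tr\!\left( T_s(z)^{-1} T_s'(z) \right) dz.
\]
By hypothesis $T_s(z)$ is nonsingular for every $z \in \Gamma$ and every $s \in [0,1]$, so this integral is well defined. Moreover $T_s$ is analytic in $z$ on a neighborhood of $\Gamma$ and is regular there (hence regular on $\Omega$), so the winding-number formula from the excerpt applies and $N(s)$ equals the number of eigenvalues of $T_s$ inside $\Gamma$, counted with multiplicity. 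In particular $N(s) \in \mathbb{Z}_{\ge 0}$ for every $s$.

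The heart of the proof is then to show that $N : [0,1] \to \mathbb{Z}$ is continuous, whence it must be constant, yielding $N(0) = N(1)$, which is exactly the conclusion. For continuity, I would regard the integrand
\[
  f(s,z) := \tr\!\left( T_s(z)^{-1} T_s'(z) \right)
\]
as a function on the compact product $[0,1] \times \Gamma$. The map $(s,z) \mapsto T_s(z)$ is jointly continuous and takes values in the open set of invertible matrices (by hypothesis); inversion is continuous on that open set, and $(s,z) \mapsto T_s'(z) = T'(z) + sE'(z)$ is continuous as well. Hence $f$ is continuous and therefore bounded on $[0,1] \times \Gamma$, and for any sequence $s_k \to s_*$ the functions $f(s_k,\cdot)$ converge uniformly to $f(s_*,\cdot)$ on $\Gamma$. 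Passing the limit inside the contour integral gives $N(s_k) \to N(s_*)$.

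The main (and essentially only) obstacle is this continuity step: one needs a uniform bound on $\|T_s(z)^{-1}\|$ over $(s,z) \in [0,1] \times \Gamma$, which is where the hypothesis that $T_s(z)$ is invertible for \emph{all} $s \in [0,1]$ (not merely at the endpoints) is used. Compactness of $[0,1] \times \Gamma$ together with the continuity of matrix inversion on the open set of nonsingular matrices supplies precisely this uniform bound, and everything else is routine. Once continuity is in hand, integer-valuedness forces $N$ to be locally constant on the connected set $[0,1]$, and the lemma follows.
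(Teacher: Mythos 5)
Your proof is correct and follows essentially the same homotopy/winding-number argument as the paper: both define the eigenvalue count for $T + sE$ via the winding number, observe it is well-defined and continuous in $s$ because the hypothesis guarantees nonvanishing of $\det(T(z)+sE(z))$ on $\Gamma$, and conclude by integer-valuedness that the count is constant. The only cosmetic difference is that the paper works directly with the determinant $f(z;s) = \det(T(z)+sE(z))$ and states continuity of the winding number tersely, whereas you use the equivalent trace-formula representation and spell out the compactness argument for uniform bounds; these are the same proof.
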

\begin{proof}
  Define $f(z; s) = \det(T(z) + sE(z))$.  The winding number of 
  $f(z; s)$ around $\Gamma$ is the number of 
  eigenvalues of 
  $T+sE$ inside $\Gamma$.  For $z \in \Gamma$ and $s \in [0,1]$, by
  hypothesis, $T(z) + s E(z)$ is nonsingular, and so $f(z; s) \neq 0$.
  Hence, the winding number is continuously defined (and thus
  constant) for $s \in [0,1]$.
\end{proof}

\begin{remark} \rm
Lemma~\ref{th-A-sE} is almost a special case of an operator
generalization of Rouch\'e's theorem due to Gohberg and
Sigal~\cite{Gohberg:1971:OGL}.  However, where Gohberg and Sigal
ensured nonsingularity of $T(z)+sE(z)$ for $z \in \Gamma$ by requiring
$\|T(z)^{-1} E(z)\| < 1$ for some operator norm, in this paper we
consider other tests of nonsingularity.
\end{remark}

In Theorem~\ref{th-nl-gerschgorin} and
Proposition~\ref{th-at-least-one}, we also make use of the theory of
subharmonic functions; see~\cite[Ch.~17]{Rudin:1987:RCA}.  Recall that an upper
semicontinuous function $\phi : \Omega \rightarrow \bbR$ is
subharmonic at $z$ if for any small enough $r > 0$,
\[
  \phi(z) \leq \frac{1}{2\pi} \int_0^{2\pi} \phi(z+re^{i\theta}) \, d\theta.
\]
It immediately follows that subharmonic functions
obey a maximum principle: if $\phi$ is subharmonic on a compact set, 
the maximum occurs on the boundary.
%
%
If $f$ is holomorphic at $z$, then $|f|$ and $\log |f|$ are
subharmonic at $z$; 
if $\phi$ and $\psi$ are subharmonic, then so are
$\phi+\psi$ and $\max(\phi, \psi)$; 
and if $\phi_j$ is a sequence of
subharmonic functions that converges uniformly to a limit $\phi$, then
$\phi$ is also subharmonic.  
We can write any vector norm as $\|v(z)\| = \max_{l^* \in \calB^*}
|l^* v(z)|$ where $\calB^*$ is an appropriate unit ball in the dual
space; hence, if $v$ is a vector-valued holomorphic function, then
$\|v\|$ and $\log \|v\| = \max_{l^* \in \calB^*} \log |l^* v|$ are
also subharmonic.

\section{Gershgorin bounds for nonlinear problems}
\label{sec-nl-gg}

Lemma~\ref{th-A-sE} provides a template for constructing inclusion
regions to compare the spectra of two related problems.  The following
is a nonlinear generalization of Gershgorin's theorem that allows us
to compare the spectrum of a general matrix-valued function to the
zeros of a list of scalar-valued functions.

\begin{theorem} \label{th-nl-gerschgorin}
  Suppose $T(z) = D(z) + E(z)$ where $D, E : \Omega \rightarrow
  \bbC^{n \times n}$ are analytic and $D$ is diagonal.  Then for any
  $0 \leq \alpha \leq 1$,
  \[
    \Lambda(T) \subset \bigcup_{j=1}^n G_j^{\alpha},
  \]
  where $G_j^{\alpha}$ is the $j$th generalized Gershgorin region
  \[
    G_j^{\alpha} = 
    \{ z \in \Omega : |d_{jj}(z)| \leq r_j(z)^{\alpha} c_j(z)^{1-\alpha} \}
  \]
  and $r_j$ and $c_j$ are the $j$th absolute row and column sums of $E$, i.e.
  \[
    r_j(z) = \sum_{k=1}^n |e_{jk}(z)|, \quad
    c_j(z) = \sum_{i=1}^n |e_{ij}(z)|.
  \]
  Moreover, suppose that $\calU$ is a bounded connected component of 
  the union $\bigcup_j G_j^{\alpha}$ such that 
  $\bar{\calU} \subset \Omega$.
  Then $\calU$ contains the same number of eigenvalues of $T$ and $D$;
  and if $\calU$ includes $m$ connected components of the Gershgorin
  regions, it must contain at least $m$ eigenvalues.
\end{theorem}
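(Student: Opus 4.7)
The plan is to prove the three claims in order.

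For the inclusion $\Lambda(T)\subseteq\bigcup_j G_j^\alpha$, I argue pointwise. Fix $z\notin\bigcup_j G_j^\alpha$, so $|d_{jj}(z)|>r_j(z)^\alpha c_j(z)^{1-\alpha}$ for every $j$, and suppose for contradiction that $T(z)v=0$ with $v\neq 0$. The row equation $d_{jj}(z)v_j=-\sum_k e_{jk}(z)v_k$ combined with H\"older's inequality applied to the right-hand side with conjugate exponents $1/\alpha$ and $1/(1-\alpha)$ (the endpoints $\alpha\in\{0,1\}$ handled by continuity), then raised to the power $1/(1-\alpha)$ and summed over $j$, yields $\sum_j(|d_{jj}|^{1/(1-\alpha)}r_j^{-\alpha/(1-\alpha)}-c_j)|v_j|^{1/(1-\alpha)}\leq 0$, contradicting the hypothesis that each summand bracket is strictly positive. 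This is Ostrowski's refinement of Gershgorin applied pointwise to the analytic matrix-valued function $T$.

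For the counting claim, I invoke Lemma~\ref{th-A-sE} with the homotopy $T_s := D+sE$, $s\in[0,1]$. The $j$th Gershgorin region of $T_s$ is $\{z:|d_{jj}|\leq s\,r_j^\alpha c_j^{1-\alpha}\}\subseteq G_j^\alpha$, so by the pointwise bound above $T_s(z)$ is nonsingular for every $s\in[0,1]$ whenever $z\notin\bigcup_j G_j^\alpha$. Because $\bar\calU\subset\Omega$ is a bounded component separated from the other components, I can pick a simple closed contour $\Gamma$ enclosing $\bar\calU$ inside a thin open neighborhood disjoint from every other component of $\bigcup_j G_j^\alpha$. Lemma~\ref{th-A-sE} then gives that $T_0=D$ and $T_1=T$ have equal eigenvalue counts inside $\Gamma$; since any zero of $\det D=\prod_j d_{jj}$ outside $\calU$ lies in some other component (hence outside $\Gamma$), these counts coincide with the counts inside $\calU$.

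The ``at least $m$'' assertion reduces to showing that every bounded connected component $\calW$ of a single $G_j^\alpha$ with $\bar\calW\subset\Omega$ contains at least one zero of $d_{jj}$. Granting this, summing over the $m$ single-index pieces contained in $\calU$ gives at least $m$ zeros of $\det D$ in $\calU$ (different components of the same $G_j^\alpha$ are disjoint, while coincident zeros of different $d_{jj}$'s count with multiplicity in $\det D$), which by the previous paragraph equals $|\Lambda(T)\cap\calU|$. To prove the reduction, I argue by contradiction: if $d_{jj}$ has no zero in $\calW$, then $\log|d_{jj}|$ is harmonic in a neighborhood of $\bar\calW$. Using the subharmonicity of each $\log|e_{jk}|$ together with the fact that log-sum-exp preserves subharmonicity, both $\log r_j$ and $\log c_j$ are subharmonic, so $\phi := \alpha\log r_j+(1-\alpha)\log c_j-\log|d_{jj}|$ is subharmonic in $\calW$, vanishes on $\partial\calW$, and is nonnegative on $\calW$. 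The maximum principle forces $\phi\equiv 0$, and then for $\alpha\in(0,1)$ each of $\log r_j,\log c_j$ must individually be harmonic in $\calW$ (a convex combination of subharmonic functions equaling a harmonic function compels each summand to be harmonic).

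The main obstacle will be converting this rigidity---$\log r_j$ and $\log c_j$ harmonic in $\calW$---into a genuine contradiction, since $r_j=\sum_k|e_{jk}|$ is a sum of moduli of analytic functions and is generally not log-harmonic on open sets. Care is needed to rule out degenerate configurations (for instance, all but one $e_{jk}$ vanishing identically in $\calW$, which would force $\calW$ not to be a genuine bounded component of $G_j^\alpha$) and to handle the boundary cases $\alpha=0$ and $\alpha=1$ by a direct row- or column-Gershgorin argument.
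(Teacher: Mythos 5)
Your handling of the inclusion $\Lambda(T)\subset\bigcup_j G_j^\alpha$ and of the equal-counting claim is sound and follows essentially the paper's route: a H\"older-based Ostrowski nonsingularity argument (you phrase it directly from the row relation $d_{jj}v_j = -\sum_k e_{jk}v_k$ with full row/column sums, whereas the paper first reduces the full-sum condition to the deleted-sum condition $|t_{jj}|>\hat r_j^\alpha\hat c_j^{1-\alpha}$ and then cites Ostrowski's test; both are the same computation in different clothing), followed by the homotopy $D+sE$ and Lemma~\ref{th-A-sE}.

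The gap is in the ``at least $m$'' claim, and you in fact flag it yourself. You correctly reduce the claim to showing that every bounded connected component $\calW$ of a single $G_j^\alpha$ (with $\bar\calW\subset\Omega$) contains a zero of $d_{jj}$, and you correctly observe that if $d_{jj}$ were zero-free near $\bar\calW$ then $\phi=\alpha\log r_j+(1-\alpha)\log c_j-\log|d_{jj}|$ would be subharmonic there. But you then apply the maximum principle on $\calW$ alone, where $\phi\ge 0$ and $\phi=0$ on $\partial\calW$, which yields only the rigidity $\phi\equiv 0$ --- a conclusion that is \emph{not} self-contradictory (for instance, configurations in which $\log r_j$ and $\log c_j$ happen to be harmonic on $\calW$, say $e_{jk}=c_k\,h(z)\,d_{jj}(z)$, are not a priori excluded). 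Your closing paragraph acknowledges you cannot rule out such degenerate cases, so the argument is left incomplete.

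The step you are missing is precisely the paper's device of enlarging the set before applying the maximum principle. Because $\calW$ is a connected component of the closed set $G_j^\alpha$, one may pick $\epsilon>0$ small enough that the closed $\epsilon$-fattening $K_\epsilon$ of $K=\bar\calW$ still lies in $\Omega$ and is disjoint from every other component. On $\partial K_\epsilon$ one then has the \emph{strict} inequality $\phi<0$ (these points lie outside $G_j^\alpha$), while $\phi\ge 0$ somewhere in $K$. Hence the maximum of $\phi$ over the compact set $K_\epsilon$ is \emph{not} attained on $\partial K_\epsilon$, which flatly contradicts subharmonicity of $\phi$ on $K_\epsilon$ --- no rigidity analysis, case split, or log-harmonicity discussion is needed. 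This strict sign change across the collar is exactly what your restriction to $\calW$ discards, and it is what turns the boundary-value-zero stalemate into an actual contradiction. Replacing your final paragraph with this $K_\epsilon$ argument closes the proof.
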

\begin{proof}
  If $z \in \Omega$ is not in $\bigcup_j G_j^{\alpha}$, then, 
  for each $j$,
  \[
    |d_{jj}| 
    > r_j^{\alpha} c_j^{1-\alpha} \\
    = (\hat{r}_j+|e_{jj}|)^\alpha \, (\hat{c}_j+|e_{jj}|)^{1-\alpha},
  \]
  where 
  \[
    \hat{r}_j = \sum_{k \neq j} |e_{jk}|, \quad
    \hat{c}_j = \sum_{i\neq j} |e_{ij}|
  \]
  are the deleted absolute row and column sums of $E$.  Applying
  H\"older's inequality with $p = 1/\alpha$ and $q = 1/(1-\alpha)$, we
  have
  \[
    |d_{jj}| 
    ~~>~~
    (\hat{r}_j^{\alpha p}+|e_{jj}|^{\alpha p})^{1/p} \,
    (\hat{c}_j^{(1-\alpha)q}+|e_{jj}|^{(1-\alpha)q})^{1/q}
    ~~\geq~~ 
    \hat{r}_j^\alpha \hat{c}_j^{1-\alpha} + |e_{jj}|,
  \]
  and by the triangle inequality,
  \[
    |d_{jj} + e_{jj}| 
    \geq |d_{jj}|-|e_{jj}| 
    > \hat{r}_j^\alpha \hat{c}_j^{1-\alpha}.
  \]
  Therefore, for each $j$,
  \[
    |t_{jj}| > \left( \sum_{k \neq j} |t_{jk}| \right)^\alpha
               \left( \sum_{i \neq j} |t_{ij}| \right)^{1-\alpha},
  \]
  and so by a nonsingularity test of 
  Ostrowski~\cite[Theorem 1.16]{Varga:2004:GAC}, 
  $T(z)$ is nonsingular.  The same
  argument shows that $D(z) + sE(z)$ is nonsingular for any 
  $0 \leq s \leq 1$.

  Because $D + sE$ is nonsingular outside the Gershgorin regions,
  Lemma~\ref{th-A-sE} implies that any closed contour in $\Omega$
  that does not pass through $\bigcup_j G_j^{\alpha}$ contains the same
  number of eigenvalues from $\Lambda(T)$ and $\Lambda(D)$, counting
  multiplicity.  Thus, if $\calU$ is a bounded connected component of 
  $\bigcup_j G_j^{\alpha}$, $\bar{\calU} \subset \Omega$, then $D$
  and $T$ must have the same number of eigenvalues inside $\calU$.

  To establish the final counting result, we now show that $d_{jj}$
  has at least one zero in each bounded connected component of
  $G_j^\alpha$ whose closure is in $\Omega$.  Define vector-valued
  functions $v$ and $w$ by $v_k = e_{jk} / d_{jj}$ and $w_k = e_{kj} /
  d_{jj}$, and note that
  \[
  G_{j}^{\alpha} 
   = \{ z \in \Omega : \phi(z) \geq 0 \}, \quad
  \phi(z) \equiv \alpha \log \|v(z)\|_1 + (1-\alpha) \log \|w(z)\|_1.
  \]
  Let $K \subset \Omega$ be the closure of a connected component of
  $G_j^{\alpha}$, and define
  \[
    K_{\epsilon} = 
    \bigcup_{z \in K} \{ z' \in \bbC : |z-z'| \leq \epsilon \}.
  \]
  For small enough $\epsilon$, we know that $K_{\epsilon}$ lies
  within $\Omega$ 
  and does not intersect any other connected
  components, so the maximum value of $\phi(z)$ on $K_{\epsilon}$
  does not occur on the boundary.  Therefore, $\phi(z)$ cannot
  be subharmonic on $K_{\epsilon}$; but it would be subharmonic
  on $K_{\epsilon}$ if $d_{jj}$ had no zeros inside $K_{\epsilon}$.
  Thus, there must be at least one zero of $d_{jj}$ inside $K_{\epsilon}$,
  and hence in $K = \bigcap_{\epsilon} K_{\epsilon}$.
\end{proof}

The usual statement of Gershgorin's theorem corresponds to the special
case when $T(z) = A-zI = (D-zI) + E$, where $D$ is the diagonal part
of $A$ and $\alpha$ is set to zero or one.  Then the Gershgorin
regions are simply disks, and a component consisting of $m$ disks
contains $m$ eigenvalues.  However, Theorem~\ref{th-nl-gerschgorin}
involves some hypotheses that are not needed for the standard version
of Gershgorin's theorem.  We illustrate the role of these hypotheses
through three examples.

\begin{example} \rm
For the matrix
\[
  T(z) = \begin{bmatrix} 1 & z \\ 0 & z \end{bmatrix},
\]
we have Gershgorin regions $G_{1}^1 = \{0\}$ and 
$G_2^1 = \{ z : |z| \geq 1 \}$.  
The first region contains the sole eigenvalue for the problem.  The
fact that the second region contains no eigenvalues does not violate
the counting result in Theorem~\ref{th-nl-gerschgorin}, since the
second region is unbounded.
\end{example}

\begin{example} \rm
\label{ex2}
Consider the matrix
\[
  T(z) = 
  \begin{bmatrix} 
    z & 1 & 0 \\ 
    0 & z^2-1 & 0.5 \\ 
    0 & 0 & 1 
  \end{bmatrix}.
\]
The Gershgorin regions are shown in Figure~\ref{fig-ex23} (left).
For this problem, $G_1^1$ is the closed unit disk, $G_2^1$ consists of
two roughly circular components around $\pm 1$, and $G_3^1$ is empty.
The region $\calU = G_1^1 \cup G_2^1$ intersects two Gershgorin
regions, and contains $3 > 2$ eigenvalues.  
Unlike Gershgorin disks in the standard problem, each
bounded Gershgorin region may contain one or many eigenvalues.
\end{example}

\begin{figure}
\begin{center}
\includegraphics[width=0.45\textwidth]{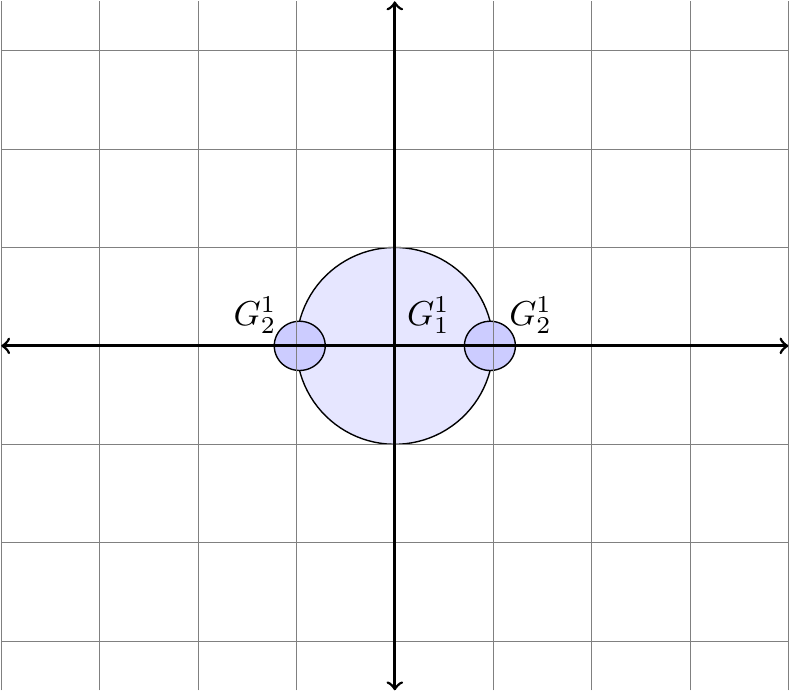}
\includegraphics[width=0.45\textwidth]{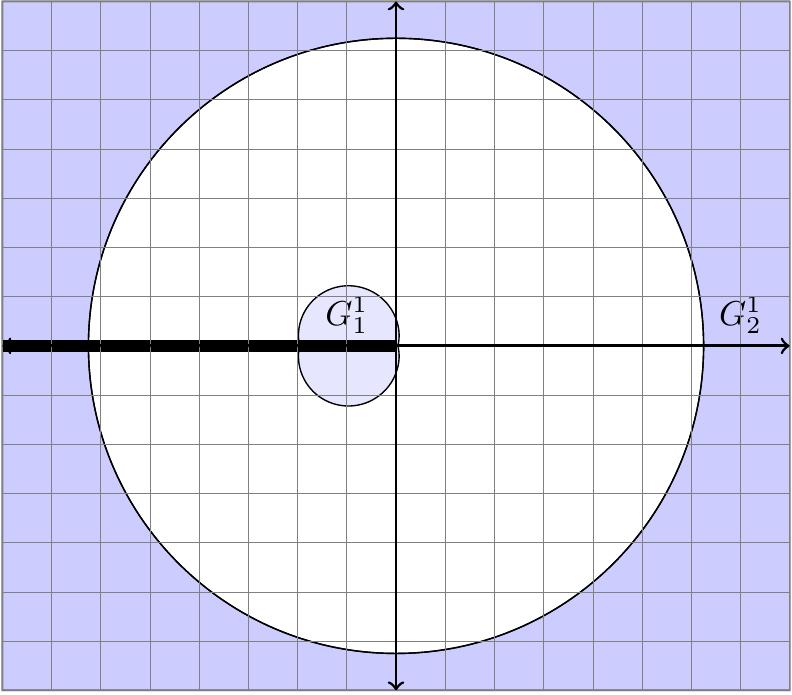}
\end{center}
\caption{Gershgorin regions in Example~\ref{ex2} (left) and
  Example~\ref{ex3} (right).  In Example 2, the second Gershgorin
  region consists of two pieces, and the union of $G_1^1$ and $G_1^2$
  contains three eigenvalues.  In Example 3, neither Gershgorin region
  contains eigenvalues; but both $\bar{G_1^1}$ and $\bar{G_2^2}$
  intersect $(-\infty,0]$, which is not in the domain $\Omega$ for
  this problem.}
\label{fig-ex23}
\end{figure}

\begin{example} \rm
\label{ex3}
Consider the matrix
\[
  T(z) = 
  \begin{bmatrix} 
    z - 0.2 \sqrt{z} + 1 & -1 \\
    0.4 \sqrt{z} & 1
  \end{bmatrix}
\]
defined on $\Omega = \bbC - (-\infty,0]$, where $\sqrt{z}$ is taken to
be the principal branch of the square root function.  If we let
$D(z)$ be the diagonal of $T(z)$, the Gershgorin regions are as
shown in Figure~\ref{fig-ex23} (right).  Note that
\[
  \det(D(z)) = z-0.2\sqrt{z} + 1 = 
  (\sqrt{z} - 0.1 - i\sqrt{0.99})(\sqrt{z} - 0.1 + i\sqrt{0.99})
\]
has two solutions on the primary sheet of the square root function,
but
\[
  \det(T(z)) = z+0.2\sqrt{z} + 1 =
  (\sqrt{z} + 0.1 - i\sqrt{0.99})(\sqrt{z} + 0.1 + i\sqrt{0.99})
\]
only has solutions on the second sheet of definition.  Thus, the set
$G_1^1$ contains two eigenvalues of $D(z)$, but no eigenvalues of
$T(z)$.  This does not violate Theorem~\ref{th-nl-gerschgorin},
because the closed set $\bar{G_1^1}$ includes $[-1,0] \not \subset
\Omega$.
\end{example}

\section{Pseudospectral regions} 
\label{sec-pseudospectra}

The spectrum of a matrix $A$ is the complement of the resolvent set,
i.e., the set of $z$ such that the resolvent operator $R(z) =
(zI-A)^{-1}$ is well-defined.  The {\em $\epsilon$-pseudospectrum} of
$A$ is equivalently defined as
\begin{align}
  \Lambda_{\epsilon}(A) 
  & \equiv \{ z : \|R(z)\| > \epsilon^{-1} \}
           \label{linear-ps-def1} \\
  & \equiv \bigcup_{\|E\| < \epsilon} \Lambda(A + E),
           \label{linear-ps-def2}
\end{align}
with the convention that $\|R(\lambda)\| = \infty$ when $\lambda \in
\Lambda(A)$.  

Several authors have worked on nonlinear generalizations of
pseudospectra
~\cite{Cullum:2001:PAN,Green:2006:PDD,Higham:2002:MPPEPACT,Michiels:2006:PSR,Michiels:2007:SST,Tisseur:2001:SPPEPA,Wagenknecht:2008:SPN}.
The usual definitions of 
pseudospectra for nonlinear problems
generalize~\eqref{linear-ps-def2}.  Let $\calF$ be a space consisting
of some set of analytic matrix-valued functions of interest; then
the $\epsilon$-pseudospectrum for $T \in \calF$ is
\begin{equation} \label{nl-ps-def-general}
  \Lambda_{\epsilon}(T) = \bigcup_{ E\in \calF, \normg{E} < \epsilon } \Lambda(T+E).
\end{equation}
where $\normg{E}$ is a global measure of the size of the perturbing
function $E$.
For polynomial eigenvalue problems and nonlinear eigenvalue problems
from the analysis of delay differential equations, many authors use
the definition~\eqref{nl-ps-def-general} with
\begin{equation} \label{nl-ps-def-michiels}
  \calF \equiv \left\{ \sum_{i=0}^m A_i p_i(\lambda) : 
                  A_i \in \bbC^{n \times n} \right\}, \quad
  \normg{\cdot} = \mbox{function of } A_0, A_1, \ldots, A_m
\end{equation}
where the functions $p_i(\lambda)$ are fixed entire 
functions~\cite[Chapter 2]{Michiels:2007:SST}.  However, we wish
to use our results to compare nonlinear eigenvalue problems
with different types of dependencies on $\lambda$; 
for example, we want to compare problems with transcendental
dependence on $\lambda$ to approximations that have polynomial or
rational dependence on $\lambda$.
For this purpose, there may not be a natural formulation in terms of a
standard set of coefficient functions.

We take $\calF$ to be the space of all analytic matrix-valued
functions $C^{\omega}(\Omega, \bbC^{n \times n})$, and measure
size with
\begin{equation} \label{our-norm-g-def}
  \normg{E} \equiv \sup_{z \in \Omega} \|E(z)\|.
\end{equation}
Using the general definition~\eqref{nl-ps-def-general} with
the size measure~\eqref{our-norm-g-def}, we have three
equivalent expressions for the 
pseudospectra,
similar to the equivalent definitions for 
ordinary pseudospectra; see~\cite[Theorem 2.1]{Trefethen:2005:SP}.  
\begin{proposition}
  Let
  $\calE = \{ E : \Omega \rightarrow \bbC^{n \times n} \mbox{ s.t. }
  E \mbox{ analytic},\  \sup_{z\in\Omega} \|E(z)\| < \epsilon \}$
  and $\calE_0 = \{ E_0 \in \bbC^{n \times n} : \|E_0\| < \epsilon \}$.
  Then the following definitions are equivalent:
  \begin{align}
  \Lambda_{\epsilon}(T) & =
    \{z \in \Omega : \|T(z)^{-1}\| > \epsilon^{-1} \}
    \label{eq-nlps-1} \\
  & =
    \bigcup_{E \in \calE} \Lambda(T + E)
    \label{eq-nlps-2} \\
  & = 
    \bigcup_{E_0 \in \calE_0} \Lambda(T+E_0).
    \label{eq-nlps-3}
  \end{align}
\end{proposition}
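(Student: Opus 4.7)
The plan is to prove the three sets are equal by showing the cyclic chain of containments $S_3 \subseteq S_2 \subseteq S_1 \subseteq S_3$, where $S_1$, $S_2$, $S_3$ denote the right-hand sides of \eqref{eq-nlps-1}, \eqref{eq-nlps-2}, and \eqref{eq-nlps-3} respectively. The first containment is essentially free, the second is a short algebraic manipulation, and the third requires a rank-one construction.

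For $S_3 \subseteq S_2$, I would simply observe that every constant matrix $E_0 \in \calE_0$ defines a constant (hence analytic) function in $\calE$ with $\normg{E_0} = \|E_0\| < \epsilon$. Thus each set $\Lambda(T+E_0)$ appearing in the union for $S_3$ also appears in the union for $S_2$.

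For $S_2 \subseteq S_1$, suppose $z \in \Lambda(T+E)$ for some $E \in \calE$. If $T(z)$ itself is singular, then by convention $\|T(z)^{-1}\| = \infty > \epsilon^{-1}$ and we are done. Otherwise, singularity of $T(z) + E(z) = T(z)\bigl(I + T(z)^{-1}E(z)\bigr)$ forces $I + T(z)^{-1}E(z)$ to be singular, which implies $\|T(z)^{-1} E(z)\| \geq 1$. Submultiplicativity of the operator norm then gives $\|T(z)^{-1}\| \geq 1/\|E(z)\| \geq 1/\normg{E} > \epsilon^{-1}$, so $z \in S_1$.

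For $S_1 \subseteq S_3$ (the main obstacle), the task is to show that if $\|T(z)^{-1}\| > \epsilon^{-1}$ at some $z$, then a constant matrix perturbation $E_0$ of norm less than $\epsilon$ suffices to make $T+E_0$ singular at that $z$. By the definition of the induced operator norm, I would pick a unit vector $u$ with $\|v\| := \|T(z)^{-1}u\| > \epsilon^{-1}$, and by duality (using the ball $\calB^*$ introduced in Section~\ref{sec-background}) pick a unit dual functional $l^*$ with $l^*v = \|v\|$. Setting $E_0 = -u l^*/\|v\|$ gives a rank-one matrix with $\|E_0\| = 1/\|v\| < \epsilon$ and $(T(z)+E_0)v = u - u = 0$, so $z \in \Lambda(T+E_0) \subseteq S_3$. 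The case where $T(z)$ is actually singular (so $\|T(z)^{-1}\| = \infty$) is handled by taking $E_0 = 0$. The technical point to be careful about is that this construction must work for a general norm induced by the ball $\calB$, which is exactly what duality delivers; once that is in hand, the chain of containments closes and yields all three equalities.
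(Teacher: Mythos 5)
Your proof is correct and follows essentially the same path as the paper's: a rank-one perturbation to show the resolvent-norm definition implies the constant-perturbation definition, and a Neumann-series-type argument (phrased as ``$I + M$ singular $\Rightarrow \|M\| \geq 1$'' rather than via its contrapositive) for the reverse direction. The only cosmetic difference is that you arrange the containments as a single cyclic chain $S_3 \subseteq S_2 \subseteq S_1 \subseteq S_3$, whereas the paper proves $S_2 = S_3$ directly in both directions and then separately establishes $S_1 = S_3$; the underlying ideas are identical.
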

\begin{proof}
  Denote the sets in~\eqref{eq-nlps-1}, \eqref{eq-nlps-2}, and
  \eqref{eq-nlps-3} as 
  $\Lambda_{\epsilon}^1(T)$,
  $\Lambda_{\epsilon}^2(T)$, and
  $\Lambda_{\epsilon}^3(T)$.  We break the proof into three steps:

$z \in \Lambda_{\epsilon}^2(T) \iff z \in \Lambda_{\epsilon}^3(T)$:~~~ 
If $T(z) + E(z)$ is singular for some $E \in \calE$, then $T(z) + E_0$ is 
singular for $E_0 = E(z)$. Since $E_0 \in \calE_0$, it follows that
$z \in \Lambda_{\epsilon}^3(T)$. Conversely, if $T(z) + E_0$ is singular
for some $E_0 \in \calE_0$, then $T(z) + E(z)$ is singular for $E$ the
constant function $E_0$.

  $z \not \in \Lambda_{\epsilon}^1(T) \implies 
   z \not \in \Lambda_{\epsilon}^3(T)$:~~~
  Suppose $\|T(z)^{-1}\| \leq \epsilon^{-1}$.  Then for any $E_0$ such
  that $\|E_0\| < \epsilon$, we have that $\|T(z)^{-1} E_0\|<1$, 
  so there is a convergent Neumann series for $I + T(z)^{-1} E_0$.
  Thus, $(T(z) + E_0)^{-1} = (I+T(z)^{-1} E_0)^{-1}T(z)^{-1}$ is well defined.
  
  $z \in \Lambda_{\epsilon}^1(T) \implies 
   z \in \Lambda_{\epsilon}^3(T)$:~~~
  Eigenvalues of $T$ belong to both sets, so we need only consider $z
  \in \Lambda^1_{\epsilon}(T)$ not an eigenvalue.  So suppose $T(z)$
  is invertible and $s^{-1} = \|T(z)^{-1}\| > \epsilon^{-1}$.  
  Then $T(z)^{-1} u = s^{-1} v$ for some vectors $u$ and $v$ with
  unit norm; alternately, write $su = T(z) v$.  Let $E_0 = -suw^*$,
  where $w^*$ is a dual vector of $v$.  Then
  $\|E_0\| = s < \epsilon$, and $T(z)+E$ is singular with $v$ as a
  null vector.
\end{proof}

The $\epsilon$-pseudospectra clearly contains the ordinary spectrum,
but we can say more.  The following result is nearly identical to the
analogous statement for ordinary pseudospectra~\cite[Theorem
  4.2]{Trefethen:2005:SP}:
\begin{proposition}
\label{th-at-least-one}
  Suppose $T : \Omega \rightarrow \bbC^{n \times n}$ is analytic and
  $\calU$ is a bounded connected component of $\Lambda_{\epsilon}(T)$ 
  with
  $\bar{\calU} \subset \Omega$.  Then $\calU$
  contains an eigenvalue of $T$.
\end{proposition}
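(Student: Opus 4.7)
The plan is to mirror the classical maximum-principle argument used for ordinary pseudospectra (Trefethen--Embree, Theorem~4.2), substituting the subharmonicity machinery recalled in Section~\ref{sec-background} for the usual holomorphic-function maximum principle. Throughout I will use the characterization $\Lambda_{\epsilon}(T) = \{z : \|T(z)^{-1}\| > \epsilon^{-1}\}$ from~\eqref{eq-nlps-1}, which makes it plain that $\Lambda_{\epsilon}(T)$ is open and that its connected components are open.

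First, I will argue by contradiction: assume $\calU$ contains no eigenvalue of $T$. I want to upgrade this to: no eigenvalue lies in $\bar{\calU}$. If some $\lambda \in \partial \calU$ were an eigenvalue, then $\|T(\lambda)^{-1}\| = \infty$, so $\lambda \in \Lambda_{\epsilon}(T)$; since $\Lambda_{\epsilon}(T)$ is open, a neighborhood of $\lambda$ lies in it, and this neighborhood must meet $\calU$ because $\lambda \in \partial \calU$. Two connected components of $\Lambda_{\epsilon}(T)$ cannot share a point, so the component of $\lambda$ coincides with $\calU$, forcing $\lambda \in \calU$; but $\calU$ is open, contradicting $\lambda \in \partial \calU$. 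Since $\bar{\calU}$ is compact, $\bar{\calU} \subset \Omega$, and eigenvalues of $T$ form a discrete set, it follows that $T(z)^{-1}$ is analytic on an open neighborhood of $\bar{\calU}$.

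Next I will bound $\|T(z)^{-1}\|$ on $\partial \calU$. For $z \in \partial \calU$, the same component-argument shows $z \notin \Lambda_{\epsilon}(T)$ (otherwise its neighborhood in $\Lambda_{\epsilon}(T)$ would meet $\calU$, forcing $z \in \calU$), so $\|T(z)^{-1}\| \le \epsilon^{-1}$. Now I appeal to subharmonicity: on the neighborhood of $\bar{\calU}$ where $T(z)^{-1}$ is analytic, write
\[
  \|T(z)^{-1}\| = \sup_{\|u\|=1,\ \ell^* \in \calB^*} |\ell^* T(z)^{-1} u|,
\]
using the dual-ball trick from the end of Section~\ref{sec-background}. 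Each $\ell^* T(z)^{-1} u$ is holomorphic, so $|\ell^* T(z)^{-1} u|$ is subharmonic; the supremum of a family of subharmonic functions that is upper semicontinuous is subharmonic, and here upper semicontinuity follows from continuity of $\|T(z)^{-1}\|$. The maximum principle for subharmonic functions then forces the supremum of $\|T(z)^{-1}\|$ over $\bar{\calU}$ to be attained on $\partial \calU$, hence $\|T(z)^{-1}\| \le \epsilon^{-1}$ throughout $\bar{\calU}$.

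This contradicts the defining inequality $\|T(z)^{-1}\| > \epsilon^{-1}$ on $\calU \neq \emptyset$, completing the proof. The only delicate step is the one in the second paragraph: ruling out eigenvalues on the boundary of $\calU$ so that $T(z)^{-1}$ is actually analytic on a neighborhood of $\bar{\calU}$ and the maximum principle applies. Everything else is a routine transcription of the linear pseudospectra argument, with subharmonicity doing the job that analyticity of the scalar resolvent does in the matrix case.
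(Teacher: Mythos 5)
Your proof is correct and takes essentially the same approach as the paper: the contrapositive via subharmonicity of $\|T(z)^{-1}\|$ and the maximum principle. The only difference is that you spell out why no eigenvalue can lie on $\partial\calU$ (so that $T(z)^{-1}$ really is analytic on a neighborhood of $\bar{\calU}$), a point the paper's proof elides; your filled-in detail is sound, though the appeal to discreteness of $\Lambda(T)$ is unnecessary since closedness of $\Lambda(T)$ in $\Omega$ together with compactness of $\bar{\calU}$ already suffices.
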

\begin{proof} 
  If $T(z)^{-1}$ is analytic on $\bar{\calU}$, then 
  $\|T(z)^{-1}\|$ is subharmonic on $\bar{\calU}$.
  Therefore, the maximum of $\|T(z)^{-1}\|$ 
  must be attained on the boundary.
  But
  $\|T(z)^{-1}\| = \epsilon^{-1}$ for $z \in \partial \calU$, and
  $\|T(z)^{-1}\| > \epsilon^{-1}$ for $z \in \calU$.  Therefore,
  $T(z)^{-1}$ cannot be analytic on $\calU$, i.e. there is an
  eigenvalue in $\calU$.
\end{proof}

A useful feature of pseudospectra is the connection with backward
error, and this carries over to the nonlinear case:
\begin{proposition}
  Suppose $T(\hat{\lambda}) x = r$ and $\|r\|/\|x\| < \epsilon$.
  Then $\hat{\lambda} \in \Lambda_{\epsilon}(T)$.
\end{proposition}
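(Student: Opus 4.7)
The plan is to use the equivalent characterization \eqref{eq-nlps-3} of the pseudospectrum: it suffices to exhibit a constant perturbation $E_0 \in \bbC^{n \times n}$ with $\|E_0\| < \epsilon$ such that $T(\hat\lambda) + E_0$ is singular. The natural choice is a rank-one perturbation that maps $x$ to $-r$, so that $x$ lies in the null space of $T(\hat\lambda) + E_0$.

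Concretely, I would first invoke a Hahn--Banach argument to produce a dual vector $w^* \in \bbC^n$ satisfying $w^* x = 1$ and $\|w^*\|_* = 1/\|x\|$, where $\|\cdot\|_*$ is the dual norm to the vector norm used in the bound $\|r\|/\|x\| < \epsilon$. (Equivalently, $w^*$ is the functional at which the unit ball in $\calB^*$ is supported by $x/\|x\|$.) Then I would define
\[
  E_0 = -r\, w^*,
\]
which is a constant matrix of operator norm $\|E_0\| = \|r\| \cdot \|w^*\|_* = \|r\|/\|x\| < \epsilon$, so $E_0 \in \calE_0$.

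A direct computation gives $(T(\hat\lambda) + E_0) x = T(\hat\lambda) x - r(w^* x) = r - r = 0$, so $T(\hat\lambda) + E_0$ is singular and $\hat\lambda \in \Lambda(T + E_0)$. By the equivalence \eqref{eq-nlps-3}, this places $\hat\lambda$ in $\Lambda_\epsilon(T)$. There is no real obstacle here; the only subtlety is making sure the norm on $E_0$ is consistent with the vector norm in the backward error bound, which is handled automatically by choosing $w^*$ as a norming functional for $x$ in the appropriate dual space.
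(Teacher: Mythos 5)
Your proposal is correct and takes essentially the same approach as the paper: both construct a rank-one constant perturbation $E_0 = -r\,w^*$, with $w^*$ a norming functional for $x$, so that $x$ becomes a null vector of $T(\hat\lambda)+E_0$ and $\|E_0\| = \|r\|/\|x\| < \epsilon$. The only difference is that you state the construction for an arbitrary vector norm via Hahn--Banach, whereas the paper writes the special case $E = -r x^T/\|x\|^2$, which is the norming-functional choice for the Euclidean norm; the underlying argument is identical.
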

\begin{proof}
  Define $E = -\frac{rx^T}{\|x\|^2}$.  Then 
  $(T(\hat{\lambda}) + E) x = 0$ and $\|E\| = \|r\|/\|x\| < \epsilon$.
\end{proof}

We can also compare eigenvalue problems via pseudospectra.  As
discussed in the next section, this is particularly useful in the case
when one of the problems is linear.
\begin{theorem} \label{th-nep-pseudospec-inclusion}
  Suppose $T : \Omega \rightarrow \bbC^{n \times n}$ and 
  $E : \Omega \rightarrow \bbC^{n \times n}$ are analytic, and let
  \[
    \Omega_{\epsilon} \equiv \{ z \in \Omega : \|E(z)\| < \epsilon \}.
  \]
  Then
  \[
    \Lambda(T+E) \cap \Omega_{\epsilon} ~~\subset~~
    \Lambda_{\epsilon}(T) \cap \Omega_{\epsilon}.
  \]
  Furthermore, if $\calU$ is a bounded connected component of
  $\Lambda_{\epsilon}(T)$ such that $\bar{\calU} \subset
  \Omega_{\epsilon}$, then $\calU$ contains exactly the same number of
  eigenvalues of $T$ and $T+E$.
\end{theorem}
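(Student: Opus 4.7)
I would prove the two assertions in sequence.

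For the set inclusion, the argument is immediate from the equivalent formulation in the preceding proposition: if $z_0 \in \Lambda(T+E) \cap \Omega_\epsilon$, then $T(z_0)+E(z_0)$ is singular and the constant matrix $E_0 := E(z_0) \in \calE_0$ satisfies $\|E_0\| < \epsilon$, so~\eqref{eq-nlps-3} gives $z_0 \in \Lambda(T + E_0) \subset \Lambda_\epsilon(T)$.

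For the counting statement, my plan is to apply Lemma~\ref{th-A-sE} to the homotopy $T + sE$ along a cycle $\Gamma$ that surrounds $\calU$ and no other component of $\Lambda_\epsilon(T)$. The nonsingularity hypothesis of the lemma---$T(z) + s E(z)$ invertible on $\Gamma$ for every $s \in [0,1]$---would follow from the pointwise bound $\|E(z)\| < \|T(z)^{-1}\|^{-1}$ on $\Gamma$, since then $\|s T(z)^{-1} E(z)\| < 1$ and the Neumann series renders $T(z) + s E(z) = T(z)\bigl(I + s T(z)^{-1} E(z)\bigr)$ invertible. Summing Lemma~\ref{th-A-sE} over the components of $\Gamma$ yields matching eigenvalue counts of $T$ and $T+E$ inside the region $N$ that $\Gamma$ bounds. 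Since $N$ will be constructed to lie in $\Omega_\epsilon$ and to meet $\Lambda_\epsilon(T)$ only in $\calU$, the first part of the theorem forces every eigenvalue of $T+E$ in $N$ to lie in $\calU$; the same is automatic for $T$, and the desired equality of counts follows.

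The principal step is therefore the construction of $\Gamma$. The continuous function $f(z) := \|T(z)^{-1}\|^{-1}$ (taken to be $0$ at eigenvalues of $T$) equals $\epsilon$ on $\partial \calU$, because $\calU$ is an open component of $\{f < \epsilon\}$; meanwhile $\|E(z)\| < \epsilon$ throughout $\bar\calU \subset \Omega_\epsilon$, so the open set $W := \{z \in \Omega : \|E(z)\| < f(z)\}$ contains $\partial \calU$. Using compactness of $\bar\calU$ in $\Omega_\epsilon$, I would take $N$ to be a thin tubular neighborhood of $\bar\calU$---for example, a finite union of open disks covering $\bar\calU$---chosen so that $\bar N \subset \Omega_\epsilon$, so that $\bar N$ avoids every other component of $\Lambda_\epsilon(T)$, and so that $\partial N \subset W$. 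Then $\Gamma := \partial N$ consists of finitely many piecewise-smooth simple closed curves satisfying all of the required properties. I expect this contour construction to be the main obstacle, particularly guaranteeing separation from other components of $\Lambda_\epsilon(T)$ in degenerate situations where their closures might meet $\bar\calU$ on the level set $\{f = \epsilon\}$; in the generic case the components are clearly separated, and a finite covering of $\bar\calU$ by small disks inside $W \cup \calU$ suffices.
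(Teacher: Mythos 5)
Your argument for the inclusion is the same as the paper's. For the counting you take a genuinely different route: the paper appeals in one line to ``continuity of eigenvalues'' along the homotopy $T + sE$, tracing eigenvalue paths that remain in $\calU$ because $\calU$ is a component of the open set $\Lambda_\epsilon(T)$ and $\bar{\calU}\subset\Omega_\epsilon$; you instead build an explicit contour and apply Lemma~\ref{th-A-sE}. Both are legitimate, and the winding-number route is arguably the more self-contained one, at the price of the contour construction.

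That construction is where your sketch undersells itself. You flag as the main obstacle the degenerate situation in which $\bar{\calU}$ meets the closure of another component $\mathcal{V}$ of $\Lambda_\epsilon(T)$ on the level set $\{f=\epsilon\}$ (such touching can genuinely occur, e.g.\ $T(z)=z^2-1$ with $\epsilon=1$). But the condition $N\cap\Lambda_\epsilon(T)=\calU$ that you tried to enforce---and rightly worried you could not always get---is stronger than you need. What is needed, and always available, is $N\setminus\calU\subset W$, where $W=\{z\in\Omega : \|E(z)\|<\|T(z)^{-1}\|^{-1}\}$. Since $f\equiv\|T(z)^{-1}\|^{-1}$ equals $\epsilon$ on $\partial\calU$ while $\|E\|<\epsilon$ there, one always has $\partial\calU\subset W$, so $\calU\cup W$ is an open neighborhood of the compact set $\bar{\calU}$, and a finite cover of $\bar{\calU}$ by disks contained in $\calU\cup W$ (with closures in $\Omega_\epsilon$) yields a suitable $N$. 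Then $N\setminus\calU\subset W$, and on $W$ the Neumann-series argument makes $T+sE$ nonsingular for every $s\in[0,1]$---so \emph{no} $T+sE$, in particular neither $T$ nor $T+E$, has an eigenvalue in $N\setminus\calU$, regardless of whether $N$ grazes some other component of $\Lambda_\epsilon(T)$. Combined with the equality of counts in $N$ from Lemma~\ref{th-A-sE} applied to the boundary curves, the equality of counts in $\calU$ follows. In short, the disk covering inside $\calU\cup W$ that you reserved for the ``generic'' case already handles the degenerate case too; the separation from other components was a red herring.
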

\begin{proof}
  The inclusion result is obvious based on the characterization of the
  pseudospectra as unions of spectra of perturbations to
  $T$.  The counting result follows from the continuity of 
  eigenvalues: the set $\Lambda_{\epsilon}(T) \cap \Omega_{\epsilon}$
  contains $\Lambda(T+sE) \cap \Omega_{\epsilon}$ for all $0 \leq s
  \leq 1$, so for each eigenvalue of $T+E$ in $\calU$, there is a
  continuously-defined path to a corresponding eigenvalue $T$ that
  remains in $\calU$.
\end{proof}

\section{Nonlinear perturbations of linear eigenvalue problems}
\label{sec-linear}
A {\em linearization} of a matrix polynomial $P : \bbC \rightarrow
\bbC^{n \times n}$ is a pair $(A,B) \in \bbC^{(nd) \times (nd)}$ such
that the polynomial $P$ and the pencil $(A,B)$ have the same spectrum
and the same Jordan structure.
There are many possible linearizations, and significant effort has
gone into characterizing linearizations and their structural
properties~\cite{Mackey:2006:SPE,Mackey:2006:VSL}.  More recent work
addresses similar linearizations for rational eigenvalue
problems~\cite{Su:2011:SRE}.  One way to find the spectrum of a
nonlinear matrix function $T$ is to approximate $T$ by some rational or
polynomial function $\hat{T}$, then find eigenvalues of $\hat{T}$
through a linearization.  In this case, the spectrum of $T$ can be
analyzed as a nonlinear perturbation of a linearization of $\hat{T}$.

We follow a simple strategy to generalize standard perturbation
theorems for linear eigenvalue problems to the case where the
perturbations are nonlinear.  Let
$T : \Omega \rightarrow \bbC^{n  \times n}$ have the form
\[
  T(z) = A-zB + E(z),
\]
and suppose we can bound $E$, either in norm or in the magnitude of
individual components, over a domain $\Omega_{E} \subset \Omega$.
We then apply perturbation theorems from the linear case that are
valid for any {\em fixed} perturbation which is similarly controlled.
This argument gives us a set that includes all eigenvalues of $T$
inside $\Omega_E$.  By continuity of the eigenvalues, if $\calU$ is a
bounded connected component such that $\bar{\calU} \subset \Omega_E$,
then $\calU$ contains the same number of eigenvalues of $T$ as of the
linear pencil $A-zB$.

Perhaps the simplest bound of this sort involves the pseudospectra of the
generalized eigenvalue problem:
\begin{corollary}
\label{th-pseudospec-lep}
  Suppose $E : \Omega \rightarrow \bbC^{n \times n}$ is analytic, and
  let
  \[
    \Omega_{\epsilon} \equiv \{ z \in \Omega : \|E(z)\| < \epsilon \}.
  \]
  Suppose also that $(A,B)$ is a regular pencil.  Then for $T = A-zB + E(z)$,
  \[
    \Lambda(T) \cap \Omega_{\epsilon} \subset \Lambda_{\epsilon}(A,B),
  \]
  where $\Lambda_{\epsilon}(A,B)$ denotes the
  $\epsilon$-pseudospectrum for the pencil $A-zB$, i.e.
  \[
    \Lambda_{\epsilon}(A,B) \equiv
    \{ z \in \bbC : \|(A-zB)^{-1}\| > \epsilon^{-1} \}.
  \]
  Furthermore, if $\calU$ is a bounded connected component of
  $\Lambda_{\epsilon}(A,B)$ such that $\bar{\calU} \subset
  \Omega_{\epsilon}$, then $\calU$ contains exactly the same number of
  eigenvalues of $T$ and of the pencil $(A,B)$.
\end{corollary}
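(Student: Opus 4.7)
The plan is to obtain this as an immediate specialization of Theorem~\ref{th-nep-pseudospec-inclusion}, taking the ``base'' nonlinear problem to be the (entire) pencil $\tilde{T}(z) \equiv A - zB$ and the perturbation to be $E(z)$. Since $\tilde{T}$ is entire, it is certainly analytic on $\Omega$, so the hypotheses of Theorem~\ref{th-nep-pseudospec-inclusion} are met by the pair $(\tilde{T}, E)$, and $\tilde{T}(z) + E(z) = T(z)$.

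The first step is to reconcile the two notions of pseudospectrum. The nonlinear $\epsilon$-pseudospectrum of $\tilde{T}$ is, by~\eqref{eq-nlps-1}, the set $\{z \in \Omega : \|(A-zB)^{-1}\| > \epsilon^{-1}\}$, which is precisely $\Lambda_{\epsilon}(A,B) \cap \Omega$. Because $\Omega_{\epsilon} \subset \Omega$, intersecting with $\Omega_{\epsilon}$ makes any distinction between $\Lambda_{\epsilon}(\tilde{T})$ and $\Lambda_{\epsilon}(A,B)$ irrelevant, and a bounded connected component $\calU$ of $\Lambda_{\epsilon}(A,B)$ whose closure lies in $\Omega_{\epsilon}$ is automatically a bounded connected component of $\Lambda_{\epsilon}(\tilde{T})$.

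With this identification, the inclusion part of the corollary is immediate from the inclusion part of Theorem~\ref{th-nep-pseudospec-inclusion}: $\Lambda(T) \cap \Omega_{\epsilon} = \Lambda(\tilde{T}+E) \cap \Omega_{\epsilon} \subset \Lambda_{\epsilon}(\tilde{T}) \cap \Omega_{\epsilon} \subset \Lambda_{\epsilon}(A,B)$. For the counting statement, apply the second part of Theorem~\ref{th-nep-pseudospec-inclusion} with $\calU$ as given; it asserts that $\calU$ contains the same number of eigenvalues of $\tilde{T}$ as of $\tilde{T}+E = T$. Finally, observe that the eigenvalues of $\tilde{T}(z) = A - zB$ are by definition the (finite) eigenvalues of the regular pencil $(A,B)$, so the desired conclusion follows.

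There is no real obstacle here; the only sliver of care required is the bookkeeping in the first step, namely checking that the $\Omega$-dependent nonlinear pseudospectrum of $\tilde{T}$ and the $\bbC$-defined pencil pseudospectrum $\Lambda_{\epsilon}(A,B)$ may be used interchangeably once one restricts to $\Omega_{\epsilon}$. Beyond this, the corollary is a direct reading of Theorem~\ref{th-nep-pseudospec-inclusion} in the special case when the unperturbed problem is linear.
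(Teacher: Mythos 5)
Your proof is correct and follows exactly the paper's approach: the paper's own proof is the single sentence ``This is a special case of Theorem~\ref{th-nep-pseudospec-inclusion},'' and your argument simply fills in the bookkeeping (identifying $\Lambda_{\epsilon}(A-zB)$ with $\Lambda_{\epsilon}(A,B)\cap\Omega$ and the eigenvalues of $A-zB$ with those of the pencil) that the paper leaves implicit.
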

\begin{proof}
  This is a special case of Theorem~\ref{th-nep-pseudospec-inclusion}.
\end{proof}

The pseudospectral bound is simple, but computing the pseudospectra
of a pencil may be expensive.  Consequently, we may be better served
by Gershgorin bounds.
\begin{corollary}
\label{th-gerschgorin}
  Suppose 
  \[
    T(z) = D-zI+E(z)
  \]
  where $D \in \bbC^{n \times n}$ is diagonal and $E : \Omega
  \rightarrow \bbC^{n \times n}$ is analytic.  Suppose also that the
  absolute row and column sums of $E$ are uniformly bounded, i.e.
  $\forall z \in \Omega$,
  \[
    \sum_{j=1}^n |e_{ij}(z)| \leq r_i, \quad
    \sum_{i=1}^n |e_{ij}(z)| \leq c_j.
  \]
  Then for any $0 \leq \alpha \leq 1$, the eigenvalues of $T$ lie in 
  $\bigcup_{i=1}^n G_i$, where the $G_i$ are generalized Gershgorin disks
  \[
     G_i \equiv \{ z \in \bbC : |z-d_{ii}| \leq \rho_i \}, \quad
     \rho_i \equiv r_i^{\alpha} c_i^{1-\alpha}.
  \]
  Furthermore, if $\calU$ is a union of $k$ disks which are disjoint
  from the remaining disks, and if $\calU \subset \Omega$, then
  $\calU$ contains exactly $k$ eigenvalues.
\end{corollary}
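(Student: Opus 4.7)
The plan is to apply Theorem~\ref{th-nl-gerschgorin} with diagonal part $D-zI$ and perturbation $E(z)$, using the uniform bounds $r_j(z) \le r_j$ and $c_j(z) \le c_j$ to replace the $z$-dependent generalized Gershgorin regions by the fixed disks $G_i$. Concretely, for any $z \in \Omega$ outside $\bigcup_i G_i$, one has $|z - d_{jj}| > r_j^\alpha c_j^{1-\alpha} \ge r_j(z)^\alpha c_j(z)^{1-\alpha}$ for each $j$, and the H\"older--Ostrowski chain already carried out in the proof of Theorem~\ref{th-nl-gerschgorin} then shows that $T(z)$ is nonsingular. This immediately gives the inclusion $\Lambda(T) \cap \Omega \subset \bigcup_i G_i$.

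For the counting statement I would follow the general strategy outlined at the start of Section~\ref{sec-linear} and use the homotopy $T_s(z) = (D-zI) + sE(z)$ between the linear pencil at $s = 0$ and the full problem at $s = 1$. For any $s \in [0,1]$ and any $z$ outside $\bigcup_i G_i$, the row and column sums of $sE(z)$ are bounded by $sr_j \le r_j$ and $sc_j \le c_j$, so the same nonsingularity argument shows $T_s(z)$ is nonsingular; hence no eigenvalue of any $T_s$ lies outside $\bigcup_i G_i$ in $\Omega$.

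Now suppose $\calU$ is the union of $k$ disks $G_{i_1}, \ldots, G_{i_k}$ that are disjoint from the remaining $n-k$ disks and satisfies $\calU \subset \Omega$. Being a finite union of closed disks, $\calU$ is compact, so it is separated by a positive distance both from $\partial\Omega$ and from the other disks. I would then pick a simple closed contour $\Gamma \subset \Omega \setminus \bigcup_i G_i$ that encloses exactly $\calU$. By Lemma~\ref{th-A-sE} applied to the homotopy $T_s$, the two endpoints $T_0 = D-zI$ and $T_1 = T$ have the same number of eigenvalues inside $\Gamma$. The eigenvalues of $D - zI$ are precisely the diagonal entries $d_{jj}$ counted with multiplicity, and each such entry is the center of its associated disk $G_j$; exactly $k$ of them lie in $\calU$, so $T$ has exactly $k$ eigenvalues in $\calU$.

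The only substantive step is the pointwise nonsingularity argument for $T_s(z)$ outside $\bigcup_i G_i$, but this is essentially the same H\"older and Ostrowski computation that powers Theorem~\ref{th-nl-gerschgorin}, with the observation that scaling $E(z)$ by $s \le 1$ only shrinks the relevant row and column sums. The contour construction is routine given the compactness of $\calU$ inside the open domain $\Omega$, so I do not expect a real obstacle here.
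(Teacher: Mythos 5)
Your proof is correct and is essentially the same as the paper's, which simply invokes Theorem~\ref{th-nl-gerschgorin} and observes that $D - zI$ has exactly $k$ eigenvalues in $\calU$; you unpack the underlying homotopy $T_s = (D - zI) + sE$ and contour argument from Lemma~\ref{th-A-sE}, which is in fact the cleaner way to see it since $\calU$ is a union of disks rather than a literal connected component of the $z$-dependent Gershgorin regions appearing in the theorem. One small point: if $\calU$ is disconnected you cannot enclose it with a single simple closed contour, so draw one contour around each connected component and add the counts.
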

\begin{proof}
  This is a direct corollary of Theorem~\ref{th-nl-gerschgorin}, 
  noting that in this case $D(z) = D - zI$ has exactly $k$
  eigenvalues in the region $\mathcal{U}$.
\end{proof}

Like the ordinary Gershgorin theorem, 
Theorem~\ref{th-nl-gerschgorin} and Corollary~\ref{th-gerschgorin} 
are particularly powerful in
combination with an appropriate change of basis.  As an
example, we have the following nonlinear version of a well-known
corollary of a theorem due to Bauer and Fike~\cite[Theorem
  IV]{Bauer:1960:NET}:
\begin{theorem} \label{th-bauer-fike}
  Suppose
  \[
  T(z) = A - z I + E(z),
  \]
  where $A \in \bbC^{n \times n}$ has a complete basis of eigenvectors
  $V \in \bbC^{n \times n}$ and $E : \Omega \rightarrow
  \bbC^{n \times n}$ is analytic.  Suppose also that 
  $|E(z)| \leq F$ componentwise for all $z \in \Omega$.  Then the eigenvalues
  of $T$ in $\Omega$ lie in the union of disks
  \[
    \bigcup_{i=1}^n \left\{ z \in \bbC : |z-\lambda_{i}| \leq \phi_i \right\}, \quad
    \phi_i \equiv n\|F\|_2 \sec(\theta_i),
  \]
  where $(\lambda_i, w_i, v_i)$ are eigentriples of $A$ and $\theta_i$
  is the angle between the left and right eigenvectors $w_i$ and
  $v_i$.  If $\calU$ is a union of any $k$ of these disks
  that are disjoint from the remaining disks, and if $\calU \subset \Omega$,
  then $\calU$ contains exactly $k$ eigenvalues of $T$.
\end{theorem}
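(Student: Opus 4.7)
The plan is to reduce to Corollary~\ref{th-gerschgorin} by diagonalizing $A$. Let $V = [v_1,\ldots,v_n]$ be the right eigenvector matrix, normalized so that $\|v_i\|_2 = 1$, and $V^{-1} A V = \Lambda = \operatorname{diag}(\lambda_1,\ldots,\lambda_n)$. Conjugating $T(z)$ by $V$ gives
\[
  V^{-1} T(z) V = \Lambda - zI + \tilde{E}(z), \qquad \tilde{E}(z) = V^{-1} E(z) V,
\]
and because similarity preserves eigenvalues, it suffices to control $\tilde{E}$ and apply Corollary~\ref{th-gerschgorin} with $D = \Lambda$ and $\alpha = 1$.

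The key step is an entrywise bound on $\tilde{E}(z)$. Writing $w_i$ for the unit-norm left eigenvector associated with $\lambda_i$, the $i$th row of $V^{-1}$ must equal the dual vector $w_i^*/(w_i^* v_i)$, whose Euclidean norm is $\sec(\theta_i)$ under the convention $\cos(\theta_i) = |w_i^* v_i|$. Cauchy--Schwarz then yields $|\tilde{e}_{ij}(z)| \leq \sec(\theta_i)\,\|E(z)\|_2$ for all $i,j$ and all $z \in \Omega$. The componentwise hypothesis $|E(z)| \leq F$ upgrades to a spectral-norm bound through
\[
  \|E(z) x\|_2 \leq \| |E(z)|\,|x| \|_2 \leq \| F|x| \|_2 \leq \|F\|_2 \|x\|_2,
\]
so $\|E(z)\|_2 \leq \|F\|_2$ uniformly on $\Omega$. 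Summing over the $n$ columns then gives the uniform row-sum bound $\sum_{j=1}^n |\tilde{e}_{ij}(z)| \leq n\,\sec(\theta_i)\,\|F\|_2 = \phi_i$.

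Feeding these $r_i = \phi_i$ into Corollary~\ref{th-gerschgorin} with $\alpha = 1$ produces exactly the stated inclusion and the counting result for disjoint unions of $k$ disks, since the diagonal part $D = \Lambda$ contributes one Gershgorin disk around each $\lambda_i$. Nothing in the argument is genuinely difficult: it is a change of basis, Cauchy--Schwarz, and an elementary passage from componentwise to spectral-norm control of $E(z)$. The mildly subtle point is just recognizing that the rows of $V^{-1}$ carry the geometric factor $\sec(\theta_i)$, which is precisely the feature that converts a blind application of Gershgorin into the refined, eigenvector-sensitive bound that gives the result its Bauer--Fike flavor.
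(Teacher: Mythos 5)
Your proof is correct and follows the same overall strategy as the paper: conjugate $T$ by the eigenvector matrix $V$ (columns normalized to unit Euclidean length), observe that the rows of $V^{-1}$ are the scaled left eigenvectors with $\|e_i^* V^{-1}\|_2 = \sec(\theta_i)$, and feed a uniform row-sum bound on $V^{-1}E(z)V$ into Corollary~\ref{th-gerschgorin} with $\alpha=1$. The only difference is bookkeeping: you bound each entry $|\tilde{e}_{ij}(z)| \le \sec(\theta_i)\|E(z)\|_2 \le \sec(\theta_i)\|F\|_2$ (via Cauchy--Schwarz plus the elementary observation $\|E(z)\|_2 \le \|F\|_2$) and then sum $n$ of them, whereas the paper bounds the entire row sum at once by $e_i^T\,|V^{-1}|\,F\,|V|\,e$ and then applies Cauchy--Schwarz to that bilinear form, using $\|\,|V|e\,\|_2 \le n$. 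Both routes land on the identical constant $\phi_i = n\|F\|_2\sec(\theta_i)$, so this is a cosmetic rather than a substantive deviation.
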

\begin{proof}
  The proof follows by applying Corollary~\ref{th-gerschgorin} to
  $V^{-1} T(z) V$ and bounding the row sums of $|V^{-1} E(z) V|$.
  Without loss of generality, assume the columns of $V$ are normalized
  to unit Euclidean length.  For any $z \in \Omega$, note that the
  absolute row sum $r_i(z)$ of $V^{-1} E V$ is bounded by
  \begin{align*}
    r_i(z) 
    &=     \sum_{j=1}^n |e_i^* (V^{-1} E(z) V) e_j| \\
    &\leq  e_i^T \, |V^{-1}| \; F \; |V| \, e,
  \end{align*}
  where $e_i \in \bbR^n$ is the $i$th standard basis vector and $e \in
  \bbR^n$ is the vector of all ones.  Let $w_i^* = e_i^*V^{-1}$ be the
  $i$th left eigenvector, and note that the normalization of $V$
  implies that $\|\, |V|e \, \|_2 \leq n$.  Therefore,
  \[
    |r_i(z)| \leq \|e_i^T |V^{-1}|\|_2 \, \|F\|_2 \, \||V| e\|_2
             \leq \|w_i\|_2 \|F\|_2 n.
  \]
  Note that $w_i^* v_i = 1$ and $\|v_i\| = 1$ by the normalization
  conditions, so
  \[
    \|w_i\|_2 = \frac{\|w_i\|_2 \|v_i\|_2}{|w_i^* v_i|} = \sec(\theta_i).
  \]
  Therefore, we have the uniform bound
  \[
    |r_i(z)| \leq \phi_i = n \|F\|_2 \sec(\theta_i).
  \]
\end{proof}

\section{Applications}
\label{sec-example}

In general, the spectrum of a nonlinear eigenvalue problem 
can be more
complicated than that of linear or polynomial eigenvalue problems,
with infinitely many eigenvalues scattered across the complex plane.
The analysis required to localize its spectrum is thus inherently more
involved.  In this section, we give three examples with infinitely
many eigenvalues.  In each case, we use our localization results to
compare eigenvalues of the original problem to those of simpler
problems.  Because different approximating problems yield accurate
eigenvalue estimates in different regions, several approximations may
be necessary to get a complete picture.

\subsection{Hadeler}

\begin{figure}
\begin{center}
\includegraphics[width=0.8\textwidth]{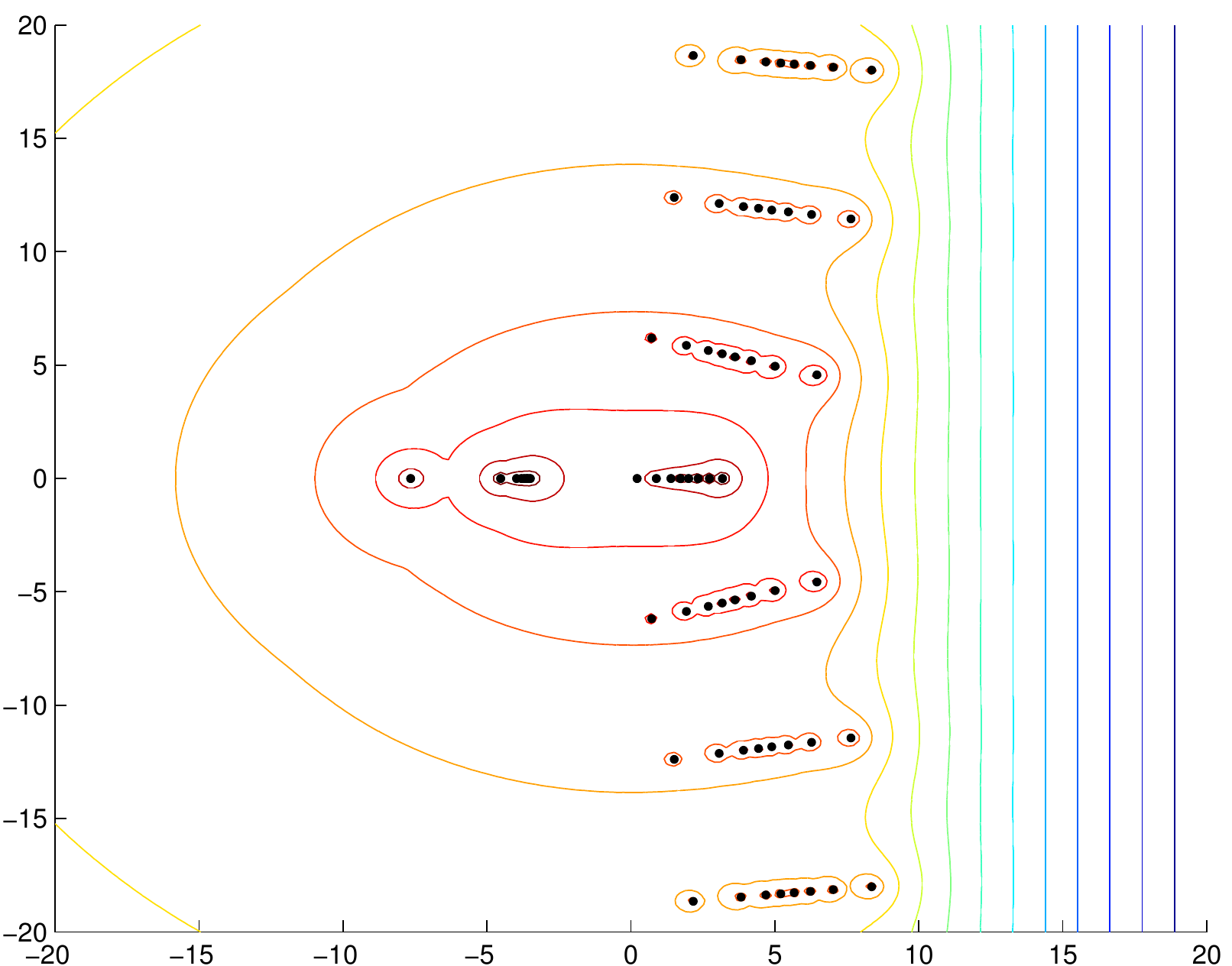}
\end{center}
\caption{Spectrum and pseudospectra for the Hadeler problem.  To
  compute the eigenvalues, we approximate eigenvalues of $T(z)$ by
  eigenvalues of a polynomial interpolating $T(z)$ through Chebyshev
  points on parts of certain curves $z_m(\theta)$ and along the real
  axis, then refine these estimates by a few steps of Newton
  iteration.  }
\label{fig:hadeler1}
\end{figure}

The Hadeler problem in the NLEVP collection has the form
\[
  T(z) = B (\exp(z)-1) + A z^2 - \alpha I
\]
where $A, B \in \bbR^{8 \times 8}$ are real and symmetric positive
definite.  The eigenvalues over part of the complex plane are shown in
Figure~\ref{fig:hadeler1}.  The spectrum consists of sixteen simple
real eigenvalues and infinitely many complex eigenvalues arranged in
groups of eight near certain curves $z_m(\theta)$ described later.  
We use Theorem~\ref{th-nl-gerschgorin} to compare $T(z)$ to two
simpler problems in order to localize both the eigenvalues close to
the real line and those that are farther away.

\subsubsection{Comparison to a polynomial problem}
\label{sec:hadeler2}

\begin{figure}
\begin{center}
\includegraphics[width=0.8\textwidth]{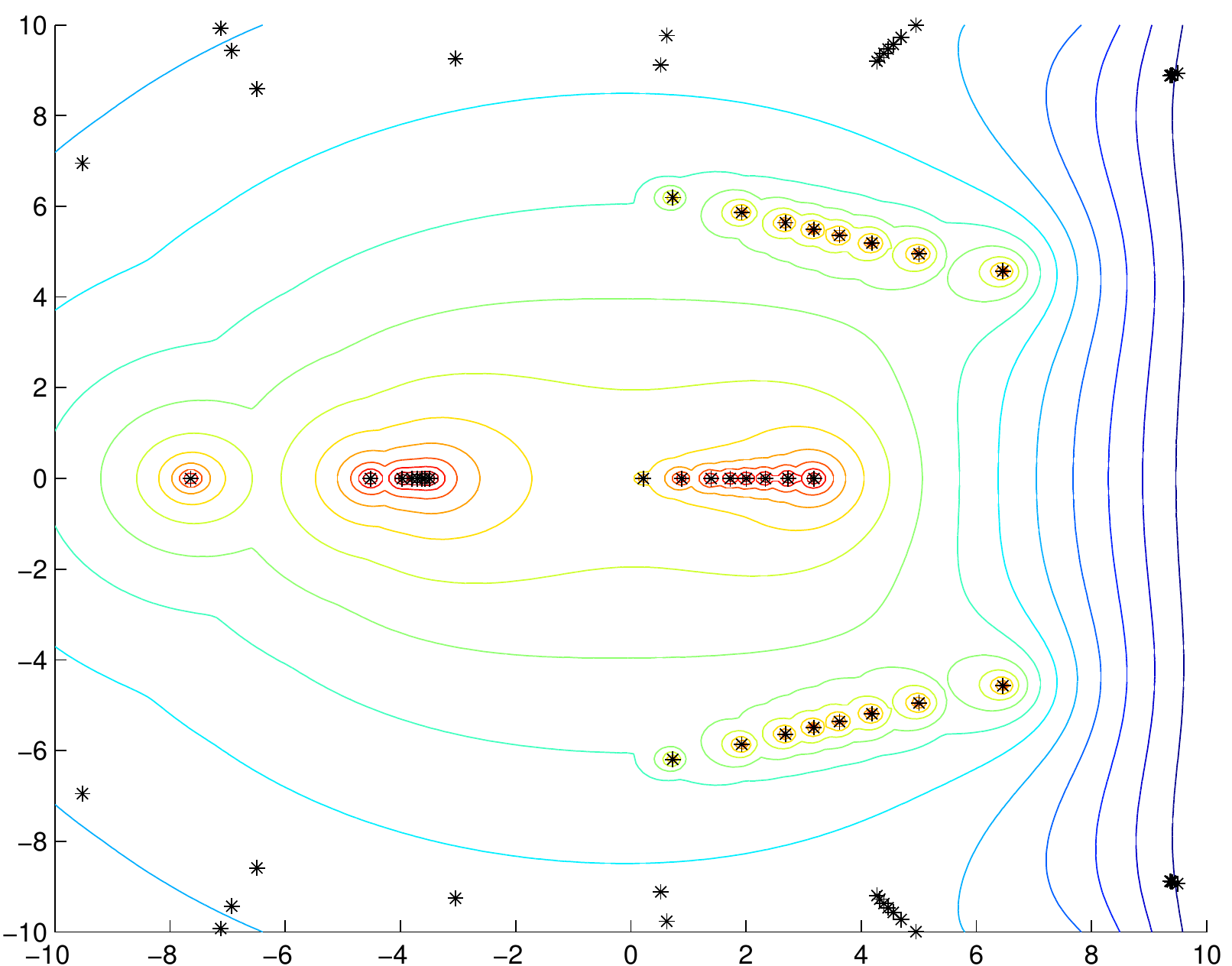}
\end{center}
\caption{Spectrum for a Chebyshev approximation to the Hadeler
         problem (stars), together with the pseudospectrum
         for the Hadeler function.}
\label{fig:hadeler3}
\end{figure}

We first consider the problem of localizing eigenvalues for the
Hadeler example near the real axis.  To do this, we approximate the
Hadeler function
\[
  T(z) = B(\exp(z)-1) + Az - \alpha I
\]
by a polynomial
\[
  P(z) = B q(z) + Az - \alpha I.
\]
where $q(z)$ is the polynomial interpolating $\exp(z)-1$ through
a Chebyshev grid on some interval $[z_{\min}, z_{\max}]$
guaranteed to contain all the eigenvalues, which we obtain
using the Gershgorin bounds from the previous section.

Suppose we write $P(z) = Q(x)$ where 
$z = (1-x)z_{\min}/2 + (1+x)z_{\max}/2$;
that is, $Q$ is a rescaled version of $P$.
If we expand $Q$ in terms of first-kind Chebyshev polynomials
$T_j$ as
\[
  Q(x) = \sum_{j=0}^n A_j T_j(x),
\]
then, assuming $A_n$ is invertible, $\det(A_n^{-1} Q(x)) = \det(C-xI)$, where $C$ is
the {\em colleague matrix} linearization~\cite{Effenberger:2012:CIN}:
\[
  C =
  \frac{1}{2}
  \begin{bmatrix}
  0 & 2I &  \\
  I & 0 & I \\
      & I & 0 & I \\
      &     & \ddots & \ddots & \ddots \\
            &&        & I & 0 & I \\
            &&        &     & I & 0
  \end{bmatrix} -
  \frac{1}{2}
  \begin{bmatrix}
    \\
    \\
    \\
    \\
    \\
    A_n^{-1} A_0 & A_n^{-1} A_1 & \ldots & A_n^{-1} A_{n-1}
  \end{bmatrix}.
\]
Note that if $\lambda$ is an eigenvalue of $T$, then it corresponds
(after an appropriate rescaling of variables) to an eigenvalue of
\[
  \hat{C} =
  \frac{1}{2}
  \begin{bmatrix}
  0 & 2I &  \\
  I & 0 & I \\
      & I & 0 & I \\
      &     & \ddots & \ddots & \ddots \\
            &&        & I & 0 & I \\
            &&        &     & I & 0
  \end{bmatrix} -
  \frac{1}{2}
  \begin{bmatrix}
    \\
    \\
    \\
    \\
    \\
    A_n^{-1} \hat{A_0} & A_n^{-1} A_1 & \ldots & A_n^{-1} A_{n-1}
  \end{bmatrix}.
\]
where $\hat{A_0}-A_0 = \left( \exp(\lambda)-1-q(\lambda) \right) B$.
Because we have expressed our polynomial in a Chebyshev basis,
the colleague linearization is convenient, but other linearizations
are natural for polynomials expressed in other 
bases~\cite{Amiraslani:2009:Linearization}.
One could also write the spectrum of $T$ in terms of a nonlinear
perturbation to one of these other linearizations,
and this would generally lead to different bounds.

By first balancing and then computing an eigendecomposition,
we find $S$ such that
\[
  S^{-1} C S = D_C.
\]
Furthermore, any eigenvalue $\lambda$ for the fully nonlinear problem
is an eigenvalue of
\[
  S^{-1} \hat{C} S = D_C + r(\lambda) S^{-1} E_0 S,
\]
where $r(\lambda) = \exp(\lambda)-1-q(\lambda)$ is the error in the
Chebyshev approximation and $E_0$ is a block matrix with $A_n^{-1}
B/2$ in the $(n,1)$ block and zeros elsewhere.  Therefore, for any
$\epsilon > 0$, the eigenvalues inside the region where
$|r(z)| < \epsilon$ lie in the union of Gershgorin disks of
radius $\epsilon \rho_j$ about the eigenvalues of $C$, where
$\rho_j$ are the absolute row or column sums of $S^{-1} E_0 S$.
The standard theory for convergence of Chebyshev approximations
tells us that for an appropriate range of $\epsilon$ values,
$|r(z)| < \epsilon$ for $z$ in a Bernstein ellipse whose radius 
depends on $\epsilon$; see~\cite[Chapter 8]{Trefethen:2012:ATA}.

If we apply the above procedure with a degree 20 interpolant on the
interval from $z_{\min} = -7.7650$ to $z_{\max} = 3.3149$, we obtain a
polynomial eigenvalue problem whose eigenvalues are shown in
Figure~\ref{fig:hadeler3}.  The polynomial provides good
estimates for the real eigenvalues, and reasonable
estimates for the first clusters of complex eigenvalues near the real
axis.  The other eigenvalues of the polynomial interpolant 
do not approximate any eigenvalues of $T$.

\begin{figure}
\begin{center}
\includegraphics[width=0.8\textwidth]{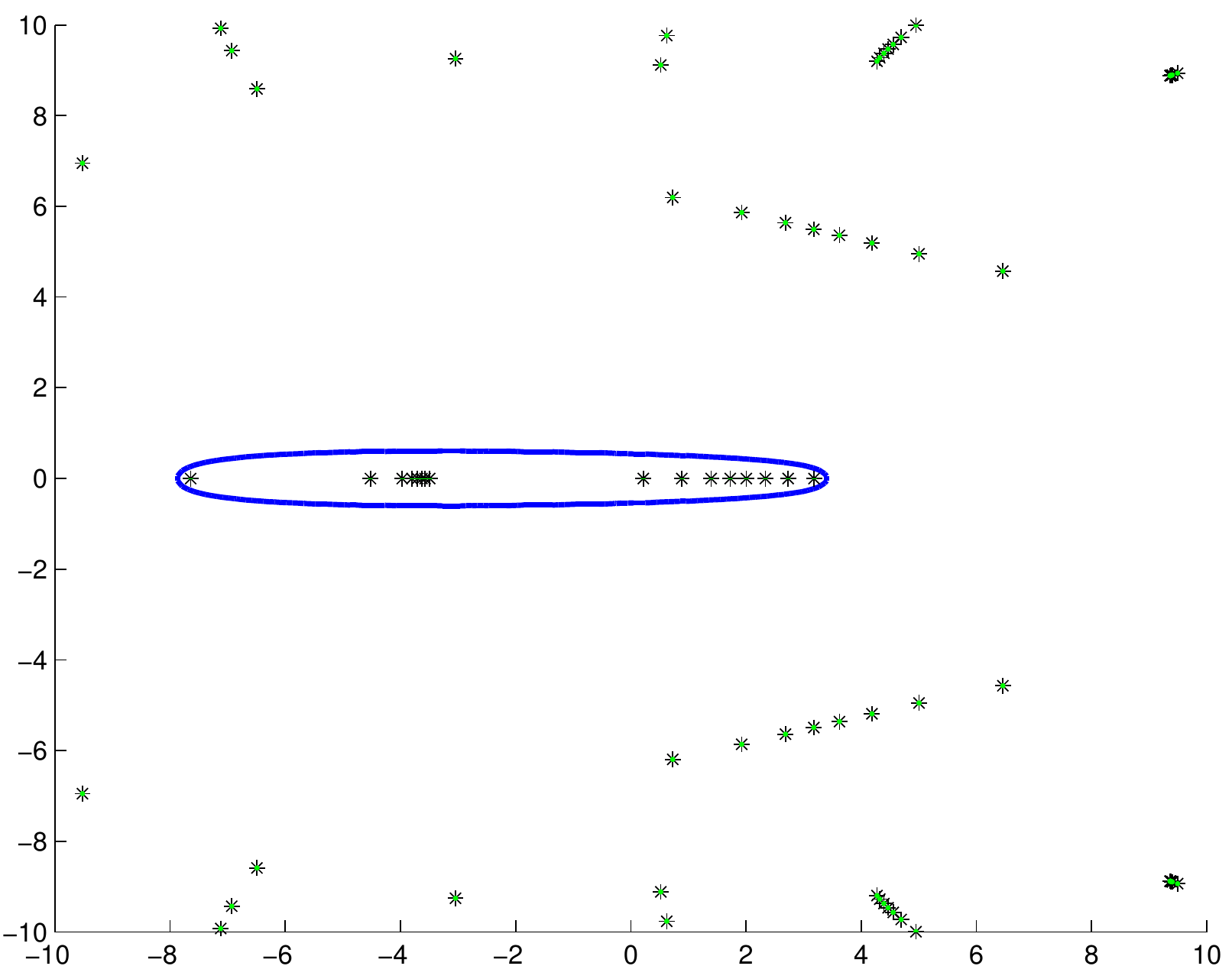}
\end{center}
\caption{Region where the interpolation error $r(z)$ for 
         the Chebyshev approximation to $\exp(z)-1$ is bounded by
         $\epsilon = 10^{-10}$.  The Gershgorin disks in this
         case are all distinct, and all have radii less than $10^{-9}$.}
\label{fig:hadeler4a}
\end{figure}

\begin{figure}
\begin{center}
\includegraphics[width=0.45\textwidth]{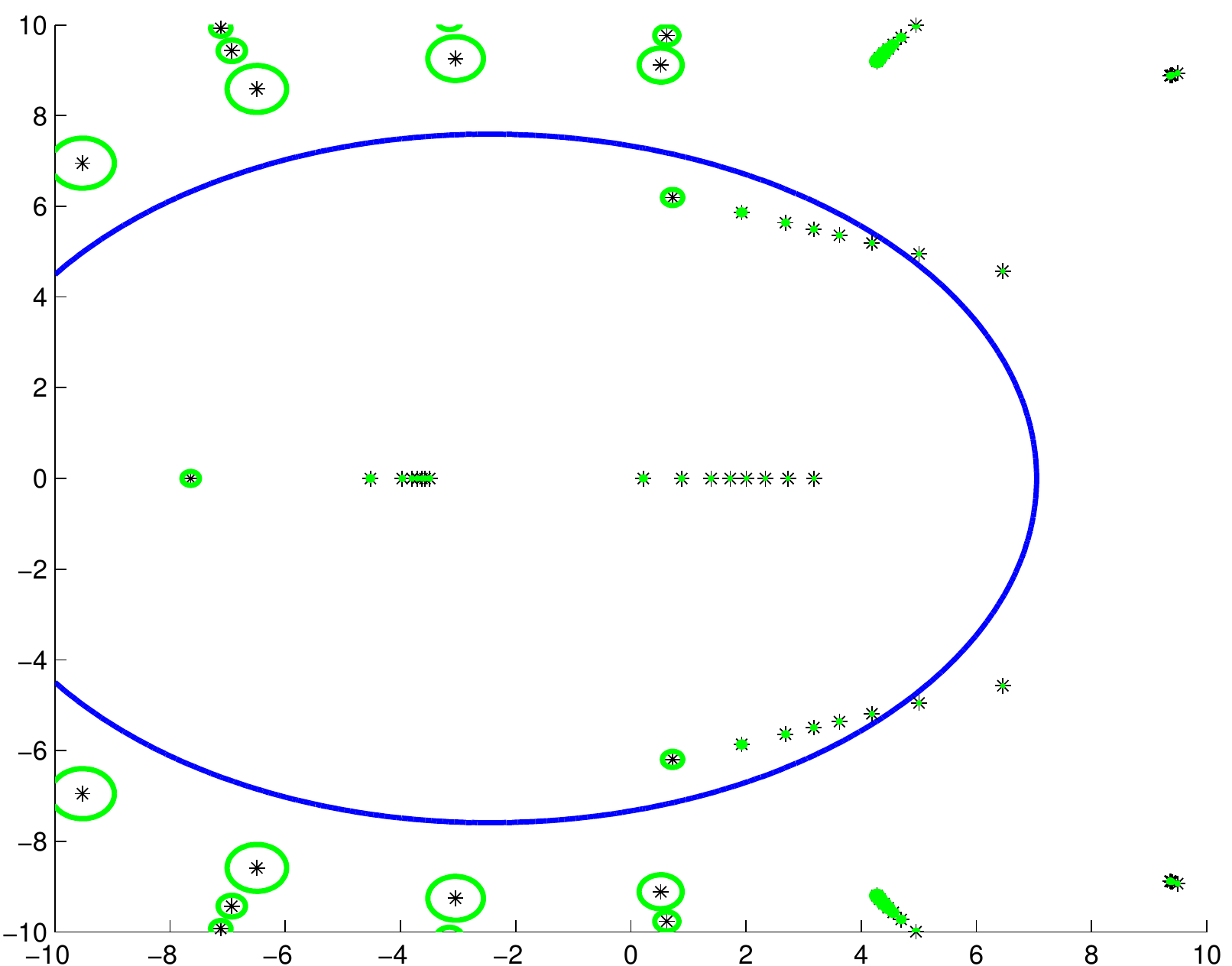}
\includegraphics[width=0.45\textwidth]{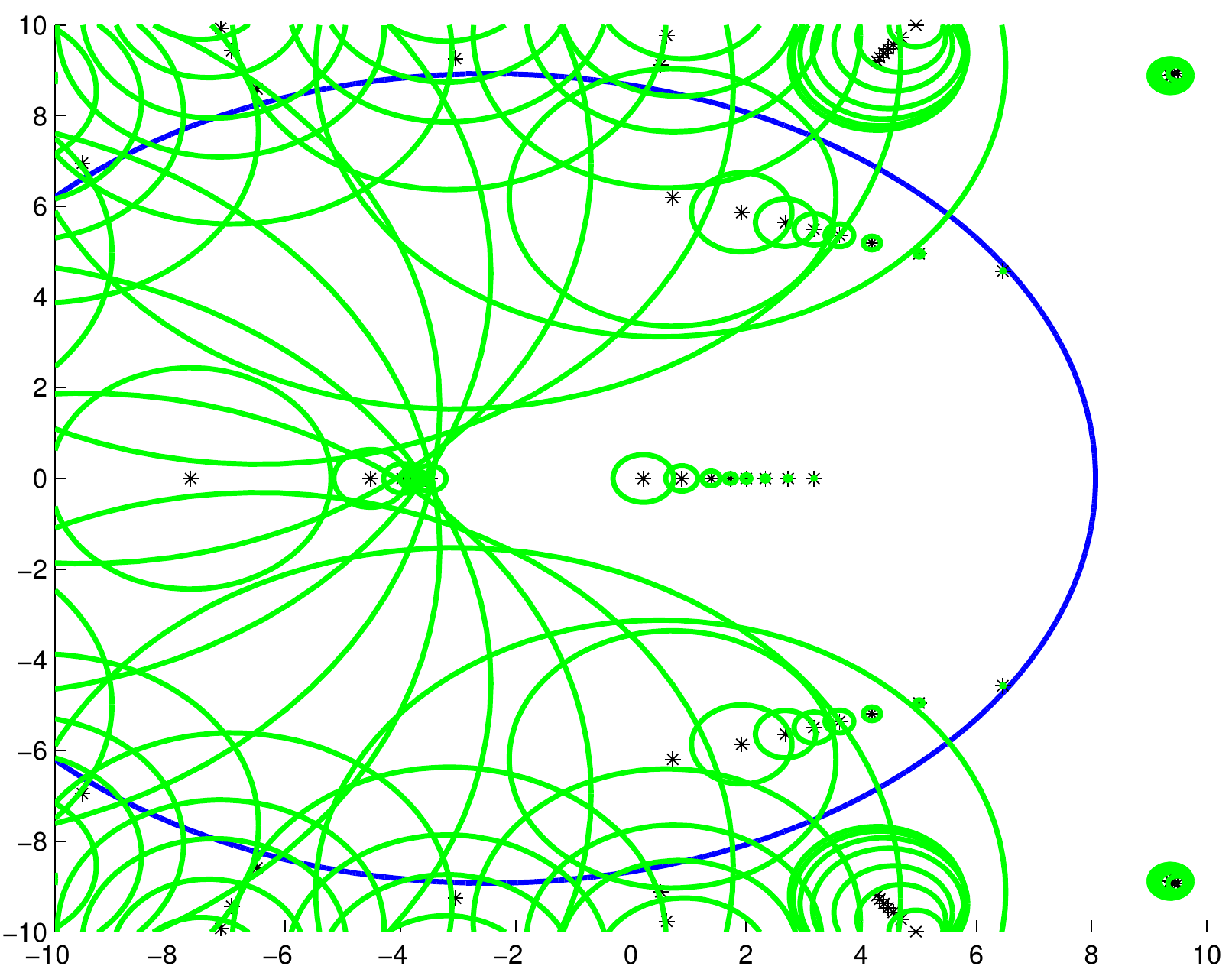}
\end{center}
\caption{Region where the interpolation error $r(z)$ for 
         the Chebyshev approximation to $\exp(z)-1$ is bounded by
         $\epsilon = 0.1$ (left) and $\epsilon = 1.6$ (right).
         The Gershgorin disks of radii $\epsilon \rho_j$ are
         shown in green in each case.}
\label{fig:hadeler4bc}
\end{figure}

For this problem, the largest Gershgorin radius $\epsilon \rho_j$ is
less than $7 \epsilon$.  Figure~\ref{fig:hadeler4a} shows the region
where $|r(z)| < \epsilon = 10^{-10}$; the corresponding Gershgorin
disks in this case are so tight that they are not visible in the
figure.  Thus, we can trust these approximations to the real
eigenvalues to an absolute error of less than $10^{-9}$.  

A more interesting bound involves the eigenvalues farther from the
real axis.  Without the comparison to the previously computed spectrum
of $T$, it would initially be unclear whether the cluster of
eigenvalues with imaginary part near 6 is spurious or not.  If we set
$\epsilon = 0.1$ and $\epsilon = 1.6$, we get the Gershgorin disks
shown in Figure~\ref{fig:hadeler4bc}; these are sufficient to show
that the polynomial eigenvalue clusters closest to the real line also
approximate eigenvalues of $T$, and to bound the approximation error.

\subsubsection{Comparison to a simplified function}
\label{sec:hadeler1}

\begin{figure}
\begin{center}
\includegraphics[width=0.8\textwidth]{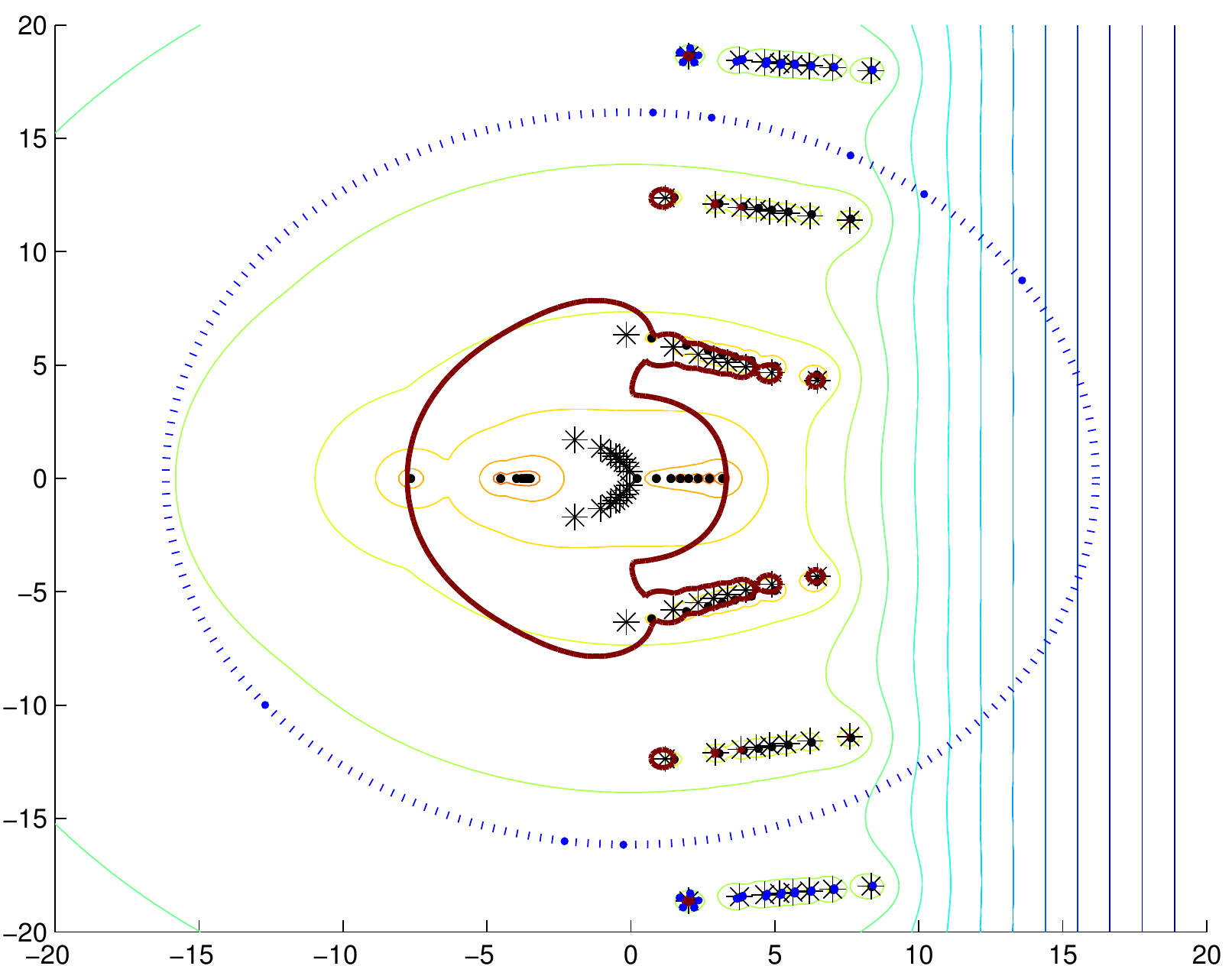}
\end{center}
\caption{Gershgorin region (solid line) containing eigenvalues of the
  Hadeler function $T$ (dots) and the simplified problem $\hat{T}$
  (stars).  Each connected component contains the same number of
  eigenvalues for both problems.  The Gersgorin region is somewhat
  complicated, but it can be shown
  that the components of the Gershgorin regions containing eigenvalues
  $\hat{\lambda} \in \Lambda(\hat{T})$ outside a disk of radius about
  $16.3$ (dashed line) is contained in a union of disks of radius
  $O(|\hat{\lambda}|^{-2})$.  
}
\label{fig:hadeler2}
\end{figure}

The polynomial approximation in the previous section resolves
eigenvalues near the real axis, but tells us nothing about
eigenvalues deeper in the complex plane.  However, for $|z| \gg 1$,
$T(z)$ is dominated by either $B \exp(z)$ or $A z^2$, and the remaining
constant term becomes relatively insignificant.  Therefore, we can
localize eigenvalues far from the origin using a simplified problem
without the constant term.

Let $U$ be a matrix of $A$-orthonormal eigenvectors for
the pencil $(B,A)$, and define
\[
  \tilde{T}(z) = U^T T(z) U
         = D_B \exp(z) + I z^2 + E
\]
where $D_B = \operatorname{diag}(\beta_1, \ldots, \beta_8)$,
$\beta_j > 0$,
and $E = -U^T (\alpha I + B) U$ is a constant matrix.  We
compare $\tilde{T}$ to the simplified function
\begin{align*}
  \hat{T}(z) &= D_B \exp(z) + I z^2
             = \diag \left( f_j(z) \right)_{j=1}^8, \\
  \quad f_j(z) &\equiv \beta_j \exp(z) + z^2 
  = 4 e^z \left( \frac{\beta_j}{4} + \left[ -\frac{z}{2}
    \exp\left( -\frac{z}{2} \right) \right]^2 \right).
\end{align*}
The eigenvalues of $\hat{T}$
lie along the curves 
$z_m(\theta) = (2\theta + (2m-1)\pi) (\cot(\theta) + i)$,
which are the preimage
of $i \bbR$ under the mapping 
$z \mapsto (-z/2) \exp(-z/2)$.
More precisely, the zeros of $f_j$ can be written as
\[
  \hat{\lambda}_{kj}^{\pm} = 
  -2 W_k\left( \pm \frac{i}{2} \sqrt{\beta_j} \right),
  \quad \mbox{ for } k \in \mathbb{Z},
\]
where $W_k$ denotes the $k$th branch of via Lambert's $W$
function~\cite{Corless:1996:lambertW}, the multi-valued solution to the equation
$W(z) \exp W(z) = z$.  

Using Theorem~\ref{th-nl-gerschgorin}, we know that
\[
  \Lambda(\tilde{T}) \subset 
  \bigcup_{j=1}^n G_j^1 \equiv
  \bigcup_{j=1}^n \{ z : |f_j(z)| \leq \rho_j \},
  \quad \rho_j \equiv \sum_{k=1}^n |e_{jk}|.
\]
Furthermore, any connected component of this region contains the same
number of eigenvalues of $\tilde{T}$ and $\hat{T}$.  In
Figure~\ref{fig:hadeler2}, we show a plot of the Gershgorin regions in
the complex plane, with the location of the eigenvalues of $\hat{T}$
marked by asterisks.  These regions are somewhat complicated, but we
can bound some of them in simpler sets.  If $\hat{\lambda}$ is a zero
of $f_j$, then Taylor expansion about $\hat{\lambda}$ yields
\[
  f_j(\hat{\lambda} + w) = bw + R(w),
  \quad b \equiv \hat{\lambda} (2-\hat{\lambda}),
  \quad |R(w)| \leq \left( 1 + (|\hat{\lambda}|^2/2) \exp |w| \right) |w|^2.
\]
If $1 + (|\hat{\lambda}|^2/2) \exp(2\rho_j/|b|) < |b|^2/(4 \rho_j)$, then
$|f_j(\hat{\lambda}+w)| > \rho_j$ for $|w| = 2\rho_j/|b|$.  The
condition always holds for $|\hat{\lambda}| > R \approx 16.3$, and so
the component of $G_j^1$ containing $\hat{\lambda} \in
\Lambda(\hat{T})$ outside this disk must lie in a disk of radius 
$2 \rho_j/|b| = O(|\hat{\lambda}|^{-2})$.
Thus, outside the disk of radius $R$, every eigenvalue of $T$ is
approximated by an eigenvalue of $\hat{T}$ with less than $2\%$
relative error, with better accuracy farther from the origin.

\subsection{Time delay}

\begin{figure}
\begin{center}
\includegraphics[width=0.8\textwidth]{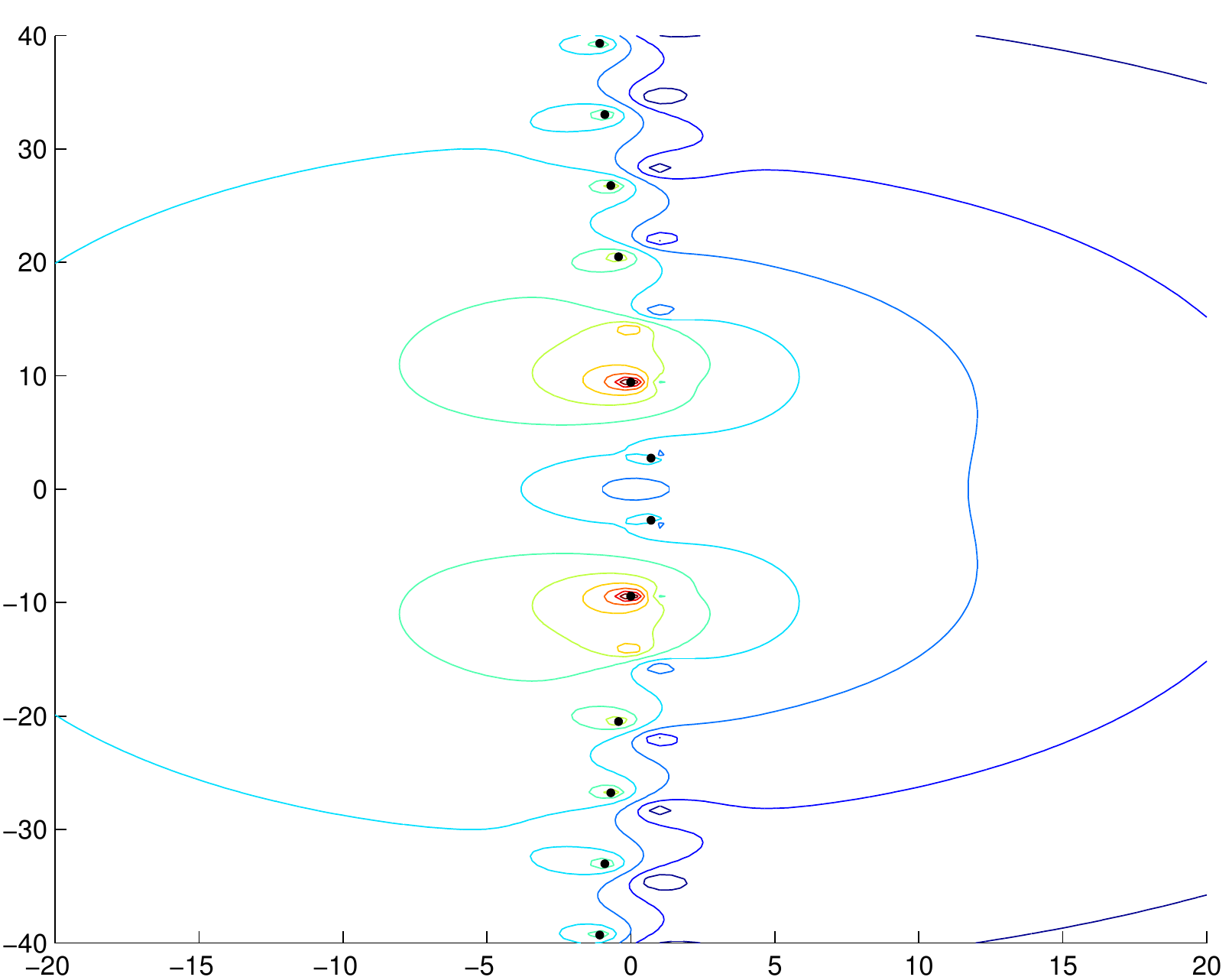}
\end{center}
\caption{Spectrum (in dots) and pseudospectra for the time delay example.
The spectrum closest to the real axis was computed using a degree 40
Chebyshev interpolant of $T$ on the interval $[-12i, 12i]$; farther
out in the complex plane, we get an initial guess from a simplified
problem, then refine using Newton iteration.}
\label{fig:dde1}
\end{figure}

Another example from the NLEVP toolbox is the {\tt time\_delay}
example, which comes from applying transform methods to a delay
differential equation.  The function is
\[
  T(z) = -zI + A_0 + A_1 \exp(-z),
\]
where $A_0$ is a companion matrix and $A_1$ is rank one. 
The spectrum and pseudospectra for this problem over
part of the complex plane are shown in Figure~\ref{fig:dde1}.

\begin{figure}
\begin{center}
\includegraphics[width=0.8\textwidth]{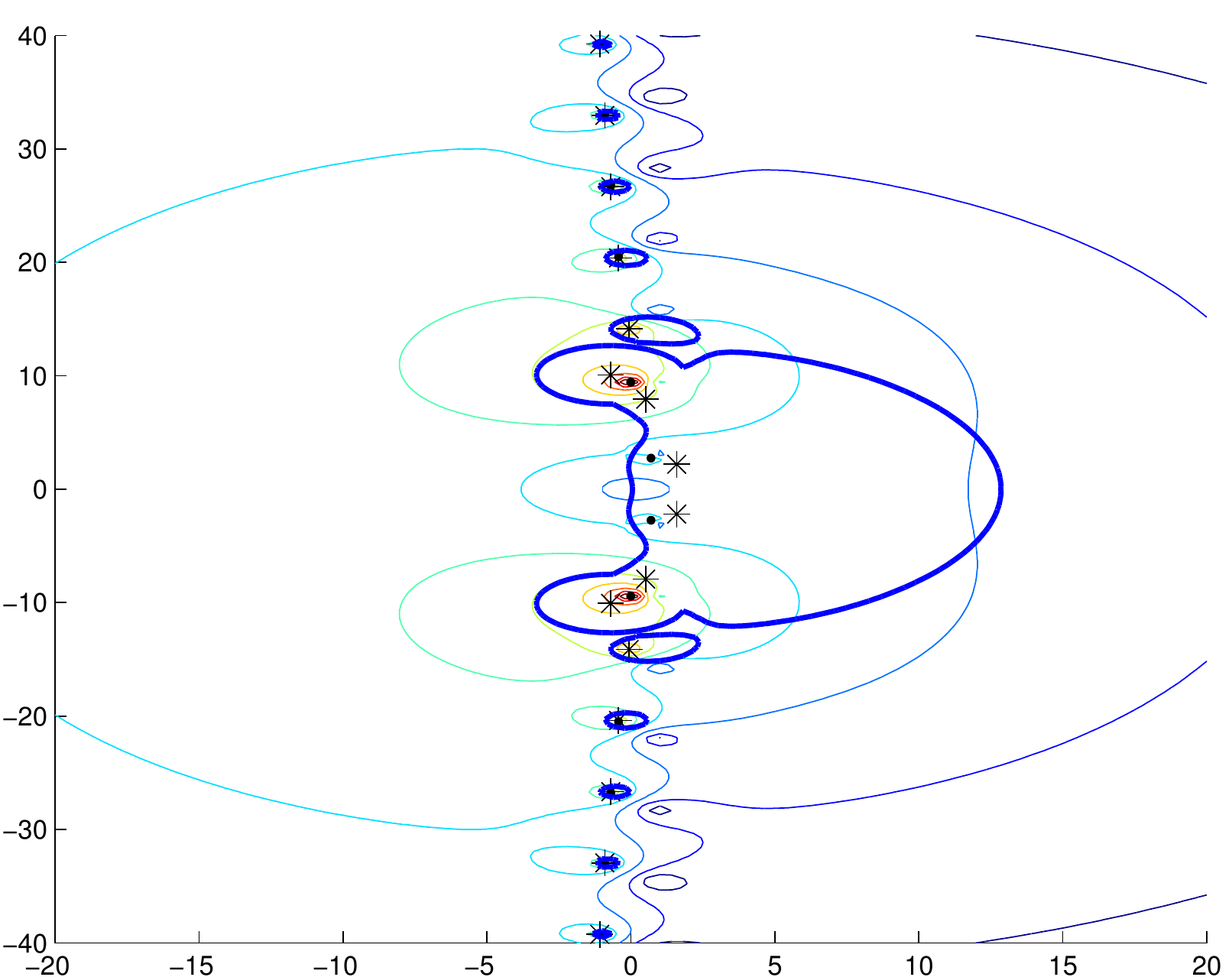}
\end{center}
\caption{Spectrum of the approximation $\hat{T}$ (stars) to the time delay
         problem $T$, Gershgorin regions (thick line), and 
         pseudospectra for $T$.}
\label{fig:dde2}
\end{figure}

As with the Hadeler example, we can get good estimates of the
eigenvalues far from the origin by dropping the constant term in the
problem.  
In order to analyze this case, let us transform $T$ into a
convenient basis.  We choose a basis of eigenvectors $V$ for $A_1$ so
that $V^{-1} A_1 V = D_1 = \diag(\mu_1, 0, 0)$ and so that the
trailing 2-by-2 submatrix of $E = V^{-1} A_0 V$ is diagonal.  The
eigenvalues of $T$ are thus also eigenvalues of
\[
  \tilde{T}(z) = -zI + D_1 \exp(-z) + E;
\]
and, as in the case of the Hadeler example, we can easily compute
the eigenvalues of the related problem
\[
  \hat{T}(z) = -zI + D_1 \exp(-z).
\]
The function $\hat{T}$ has a double eigenvalue at the origin
corresponding to the zero eigenvalues of $A_1$; the remaining
eigenvalues are solutions of the equation
\[
  z \exp(z) = \mu_1 \approx -13.3519,
\]
which can be written as $W_k(\mu_1)$ for $k \in \mathbb{Z}$,
where $W_k$ is again the $k$th branch of Lambert's $W$ function.
Eigenvalues of $\tilde{T}$ must lie in the (column) Gershgorin
region
\[
  \bigcup_{j=1}^3 \{ z : |-z + \mu_j \exp(-z)| \leq \rho_j \},
\]
where $\rho_j$ are the absolute column sums of $E$.
In Figure~\ref{fig:dde1},
we plot this region in the complex plane, with the location of the
eigenvalues of $\hat{T}$ marked by asterisks.  
Theorem~\ref{th-nl-gerschgorin} gives us that each component contains
the same number of eigenvalues of $\tilde{T}$ and $\hat{T}$.  Note in
particular that this means that the central ``blob'' in the
pseudospectrum must contain exactly six eigenvalues of $T$, as indeed
it does -- two eigenvalues closer to the origin, and a pair of
degenerate double eigenvalues at $\pm 3\pi i$; see
\cite{Jarlebring:2012:CNNEP}.

\begin{figure}
\begin{center}
\includegraphics[width=0.8\textwidth]{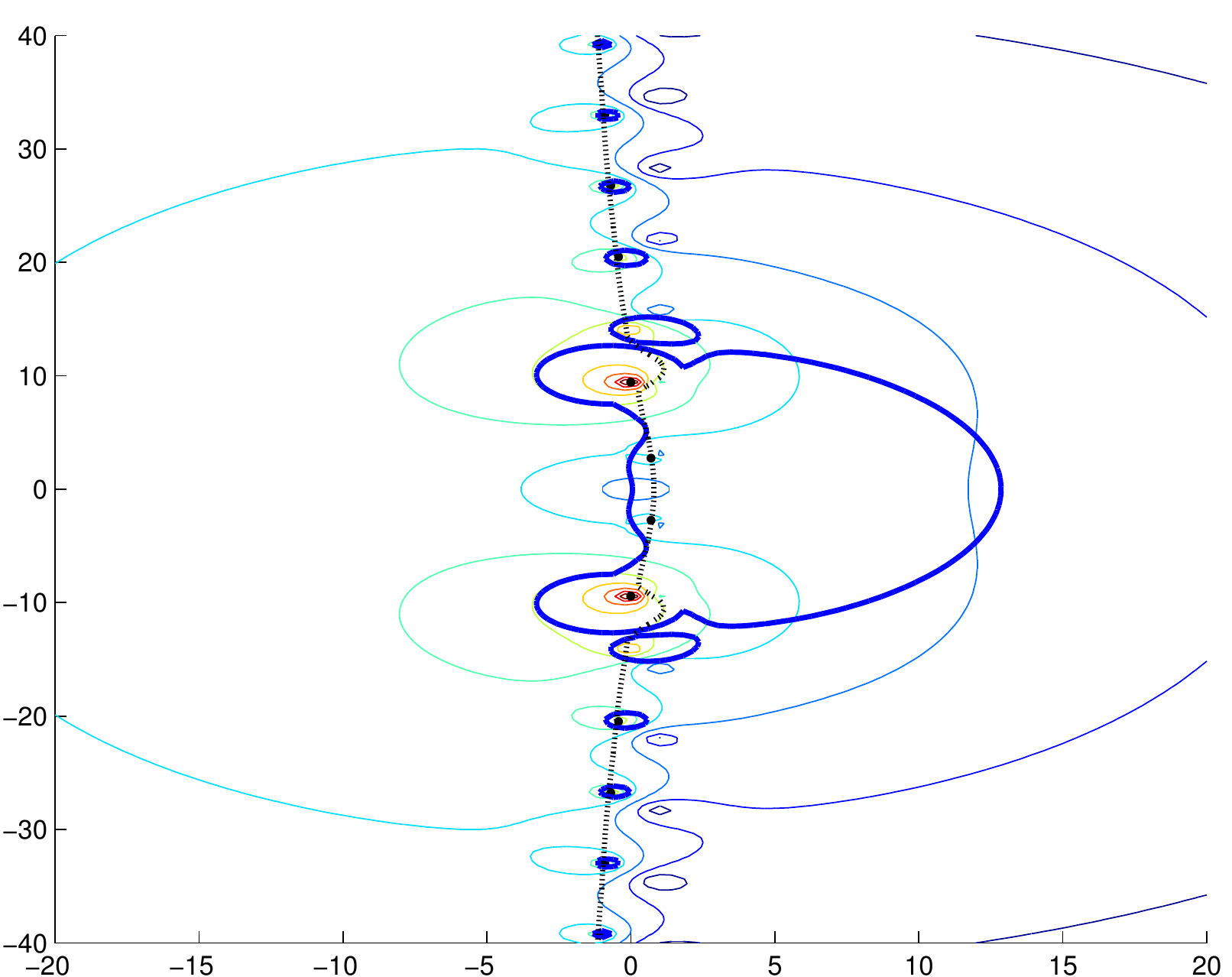}
\end{center}
\caption{Gershgorin region for a simplified problem with the
  exponential term dropped (left of the dashed line).  This is
  superimposed on the Gershgorin regions from Figure~\ref{fig:dde2}.}
\label{fig:dde3}
\end{figure}

The Gershgorin regions shown in Figure~\ref{fig:dde2} obtained by
comparing $T$ to $\hat{T}$ extend far into the right half plane.
We can tighten our inclusion region somewhat by also comparing
$T$ to $A_0-zI$ and letting $A_1 \exp(-z)$ be the error term.
Define
\[
  \breve{T}(z) = V^{-1} T(z) V = D_0-zI + \breve{E} \exp(-z)
\]
where $D_0 = V^{-1} A_0 V$ and $\breve{E} = V^{-1} A_1 V$.  Applying
Theorem~\ref{th-nl-gerschgorin}, any eigenvalue of $T$ must live in
the union of the regions $|d_i-z| \leq \gamma_i |\exp(-z)|$, where
$\gamma_i$ is an absolute row sum of $\breve{E}$.  This region is
bounded from the right by the contour shown in Figure~\ref{fig:dde3}.
Intersecting these bounds with the previous Gershgorin bounds give
very tight control on the spectrum.

\begin{remark} \rm
The determinant of $T(z)$ above is exactly the type of
scalar function studied in~\cite{Bellman:1963:DDE}, and there are similarites between
the analysis done there and in this section, i.e., dropping the
constant term.
\end{remark}
\subsection{Resonances} 

Our final example is a problem associated with resonances of a
Schr\"odinger operator on the positive real
line~\cite{Zworski:1999:RIP,Bindel:2006:ROD}.  We seek the values of
$\lambda$ such that the following two-point boundary value problem has
nontrivial solutions:
\begin{align}\label{pde}
\begin{split}
 \left(-\frac{d^2}{dx^2} + V - \lambda\right)\psi = 0 
 \qquad \text{on } (0,b),\\ 
 \psi(0) = 0
 \qquad\text{and}\qquad 
 \psi'(b) = i\sqrt{\lambda}\psi(b), 
 \end{split}
\end{align}
where $V$ equals $V_0 > 0$ on $(a,b)$ and is zero elsewhere.  In our
computations, we used $(a,b) = (2,3)$ and $V_0 = 5$.  We
formulate~\eqref{pde} as a finite-dimensional nonlinear eigenvalue
problem by shooting from $0$ to $a$ and from $a$ to 
$b$~\cite[Chapter 7]{Ascher:1998:CMO}, described as follows.

Rewriting~\eqref{pde} in first-order form, we have
\begin{equation} \label{res-first-order}
  \frac{du}{dx} =
  \begin{bmatrix}
    0 & 1 \\
    V-\lambda & 0
  \end{bmatrix}
  u, \mbox{ where }
  u(x) \equiv
  \begin{bmatrix}
    \psi(x) \\
    \psi'(x)
  \end{bmatrix}.  
\end{equation}
Then the matrices
\[
  R_{0a}(\lambda) = 
  \exp\left( a 
  \begin{bmatrix} 
    0 & 1 \\
    -\lambda & 0
  \end{bmatrix}
  \right),
  \quad
  R_{ab}(\lambda) = 
  \exp\left( (b-a) 
  \begin{bmatrix} 
    0 & 1 \\
    V_0-\lambda & 0
  \end{bmatrix}
  \right)
\]
respectively map $u(0) \mapsto u(a)$ and $u(a) \mapsto u(b)$.
Thus,~\eqref{pde} is equivalent to the six-dimensional nonlinear
eigenvalue problem
\begin{equation} \label{res-nep6d}
  T(\lambda) u_{\mathrm{all}} \equiv
  \begin{bmatrix}
    R_{0a}(\lambda) & -I & 0 \\
    0 & R_{ab}(\lambda) & -I \\
    \begin{bmatrix} 
      1 & 0 \\ 0 & 0
    \end{bmatrix} &
    0 &
    \begin{bmatrix}
      0 & 0 \\
      -i\sqrt{\lambda} & 1
    \end{bmatrix}
  \end{bmatrix}
  \begin{bmatrix} u(0) \\ u(a) \\ u(b) \end{bmatrix} = 0.
\end{equation}

In the next section, we derive a rational approximation $\hat{T}
\approx T$ whose linearization $K - \lambda M$ corresponds to a
discretization of~\eqref{pde}.  We then use the eigenvalues of
$\hat{T}$ as starting points to compute eigenvalues of $T$, and
establish by Theorem~\ref{th-nep-pseudospec-inclusion} that this
procedure finds all eigenvalues of $T$ in a region of interest.

\subsubsection{Rational approximation of the resonance problem}

\begin{figure}
\begin{center}
\includegraphics{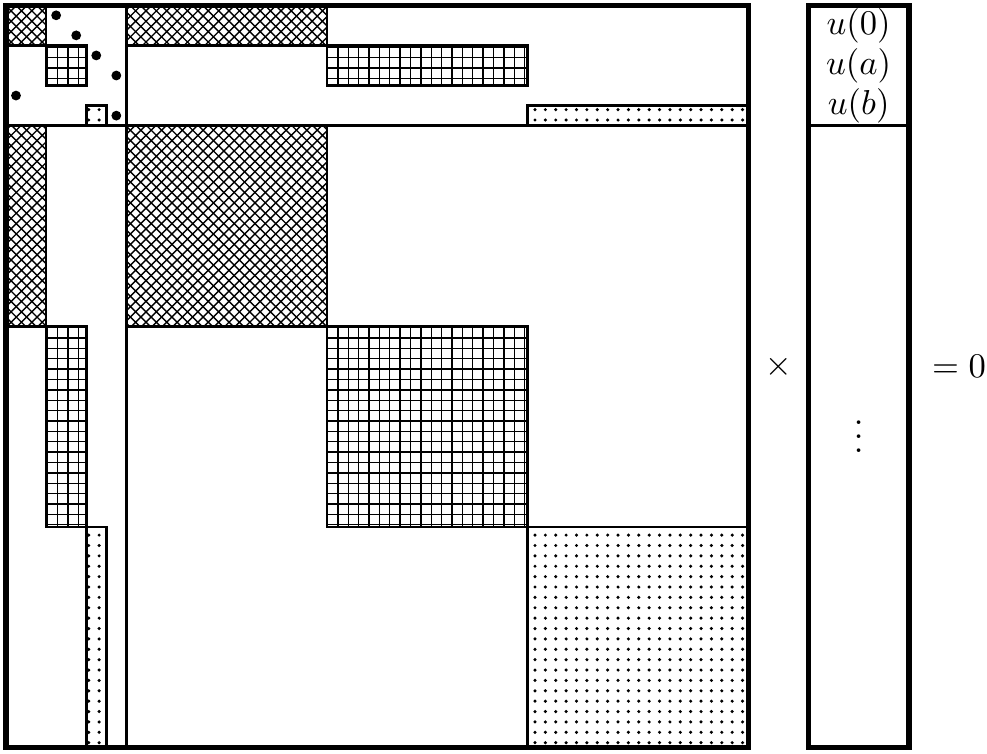}
\end{center}
\caption{Linearized rational approximation to~\eqref{res-nep6d}.  The
  rational eigenvalue problem is a Schur complement in a linear
  eigenvalue problem obtained by eliminating all but the first six
  variables and equations.  The overall matrix is assembled from
  linear matrix-valued functions $A^{0a}(\lambda)$ (cross-hatched), 
  $A^{ab}(\lambda)$ (plaid), and $A^Z(\lambda)$ (dots) that 
  generate rational approximations to $R_{0a}(\lambda)$,
  $R_{ab}(\lambda)$, and $-i \sqrt{\lambda}$, respectively.}
\label{fig:spyrat}
\end{figure}

We construct $K - \lambda M$ by introducing auxiliary variables $y$
whose elimination produces a rational approximation to $T(\lambda)$.
That is, we write $A(\lambda) = K-\lambda M$
so that
\begin{equation} \label{block-A-T-eq}
  \begin{bmatrix}
    A_{11}(\lambda) & A_{12}(\lambda) \\
    A_{21}(\lambda) & A_{22}(\lambda)
  \end{bmatrix}
  \begin{bmatrix}
    u_{\mathrm{all}} \\
    y
  \end{bmatrix} \approx
  \begin{bmatrix}
    T(\lambda) u_{\mathrm{all}}\\
    0
  \end{bmatrix}.
\end{equation}
If we eliminate the auxiliary variables and the equations that
define them, we are left with a rational approximation to 
$T(\lambda)$ given by the leading 
6-by-6
Schur complement in $A$:
\[
  T(\lambda) \approx 
  \hat{T}(\lambda) = 
  A_{11}(\lambda) - A_{12}(\lambda) A_{22}(\lambda)^{-1} A_{21}(\lambda).
\]
More precisely, we will define $A_{11}(\lambda)$ to be the constant
part of $T(\lambda)$, i.e.
\[
  A_{11}(\lambda) =
  \begin{bmatrix}
    0 & -I & 0 \\
    0 &  0 & -I \\
    \begin{bmatrix} 
      1 & 0 \\ 0 & 0
    \end{bmatrix} &
    0 &
    \begin{bmatrix}
      0 & 0 \\
      0 & 1
    \end{bmatrix}
  \end{bmatrix},
\]
then add three submatrices $A^{0a}(\lambda)$,
$A^{ab}(\lambda)$, and $A^Z(\lambda)$ 
(to be defined in a moment)
that generate rational approximations to the nonlinear terms
$R_{0a}(\lambda)$, $R_{ab}(\lambda)$, and $-i\sqrt{\lambda}$
when the Schur complement in $A$ is taken.
The structure of the 
matrix $A$ in terms
of these submatrices is shown schematically in
Figure~\ref{fig:spyrat}.

To define the rational approximation to $R_{0a}(\lambda)$, we start by
writing the exact function $R_{0a}(\lambda) u(0)$ via the equations
\begin{equation} \label{shooting-eq-exact}
  \begin{bmatrix}
    0 & B(a) \\
   -I & B(0) \\
    0 & -\frac{d^2}{dx^2} - \lambda \\
  \end{bmatrix}
  \begin{bmatrix}
    u(0) \\
    \psi
  \end{bmatrix} = 
  \begin{bmatrix}
    R_{0a}(\lambda) u(0) \\
    0 \\
    0
  \end{bmatrix},
\end{equation}
where
\[
  B(x) \psi \equiv \begin{bmatrix} \psi(x) \\ \psi'(x) \end{bmatrix} = u(x).
\]
If we discretize~\eqref{shooting-eq-exact} by replacing
$\psi$ with a vector $\hat{\psi}$ of function values at sample points, 
and correspondingly replace the operators in the second column 
in~\eqref{shooting-eq-exact} with discrete approximations, 
we are left with a matrix equation
\begin{equation} \label{shooting-eq-approx}
  A^{(0a)}(\lambda) 
  \begin{bmatrix}
    u(0) \\
    \hat{\psi}
  \end{bmatrix} \equiv
  \begin{bmatrix}
    0 & \hat{B}(a) \\
   -I & \hat{B}(0) \\
    0 & K_H-\lambda M_H \\
  \end{bmatrix}
  \begin{bmatrix}
    u(0) \\
    \hat{\psi}
  \end{bmatrix} = 
  \begin{bmatrix}
    \hat{R}_{0a}(\lambda) u(0) \\
    0 \\
    0
  \end{bmatrix},
\end{equation}
where $K_H$ and $M_H$ are some fixed matrices of dimension 
$(N-2) \times N$.  For our problem, we set $\hat{\psi}$ to be
function values at a Chebyshev mesh of $N=40$ points on $[0,a]$,
and $(K_H-\lambda M_H)$ represents a pseudospectral collocation
discretization of $-d^2/dx^2-\lambda$~\cite{Trefethen:2000:SMM}.
The matrix $\hat{A}^{ab}(\lambda)$ is defined similarly.

To define $A^Z$, we begin with the best max-norm rational
approximation to $z^{-1/2}$ on an interval $[m,M]$, which was first
discovered in 1877 by Zolotarev~\cite[\S 5.9]{Higham:2008:FM}.  The
approximation is
\[
  z^{-1/2} \approx r(z) = \sum_{j=1}^{N_Z} \frac{\gamma_j}{z-\xi_j}
\]
where the poles $\xi_j$ and the weights $\gamma_j$ are defined
in terms of elliptic integrals; for details, we refer to Method 3
of~\cite{Hale:2008:CAL}.
We approximate $-i\sqrt{\lambda}$ by 
$-i/r(\lambda)$,
which we encode as the leading 1-by-1 Schur
complement in
\[
  A^Z(\lambda) =
  \begin{bmatrix} 
    0 & i \\
    1 & 0 & \gamma_1 & \gamma_2 & \ldots & \gamma_{N_Z} \\
      & 1 & \xi_1-\lambda \\
      & 1 & & \xi_2-\lambda \\
      & \vdots & & & \ddots \\
      & 1 & & & & \xi_{N_Z}-\lambda \\
  \end{bmatrix}.
\]
For our problem, we use the Zolatorev approximation with $N_Z = 20$
poles, chosen for optimality on the interval $[m,M] = [0.1,500]$.

\subsubsection{Analysis of the rational approximation}

Our goal in this section will be to find all eigenvalues in the region
$D$ bounded by the ellipse $\Gamma$ shown in Figure~\ref{fig:err}.
$D$ is clearly contained in $\Omega_{\epsilon}$, where $\epsilon =
10^{-8}$. Moreover, $\Gamma$ was chosen so that $\|T(z)^{-1}\| <
\epsilon^{-1}$ for all $z \in \Gamma$.  This means that the contour
$\Gamma$ does not intersect the $\epsilon$-pseudospectrum of $T$, and
hence any connected component of $\Lambda_{\epsilon}(T)$ in $D$
contains the same number of eigenvalues of $T$ and $\hat{T}$ (by
Theorem~\ref{th-nep-pseudospec-inclusion}). It follows that the same
number of eigenvalues of $T$ and $\hat{T}$ lie in $D$.

Since the norm of the perturbation is small there, we expect that the
eigenvalues of $\hat{T}$ in $D$ are very good approximations to those
of $T$.  We refine each these eigenvalue estimates by Newton iteration
on a bordered system~\cite[Chapter 3]{Govaerts:2000:NMB}.  The
absolute difference between each eigenvalue of $\hat{T}$ and the
corresponding eigenvalue of $T$ is shown in
Table~\ref{tbl:error-bounds}

\begin{figure}
\begin{center}
\includegraphics[width=0.4\textwidth]{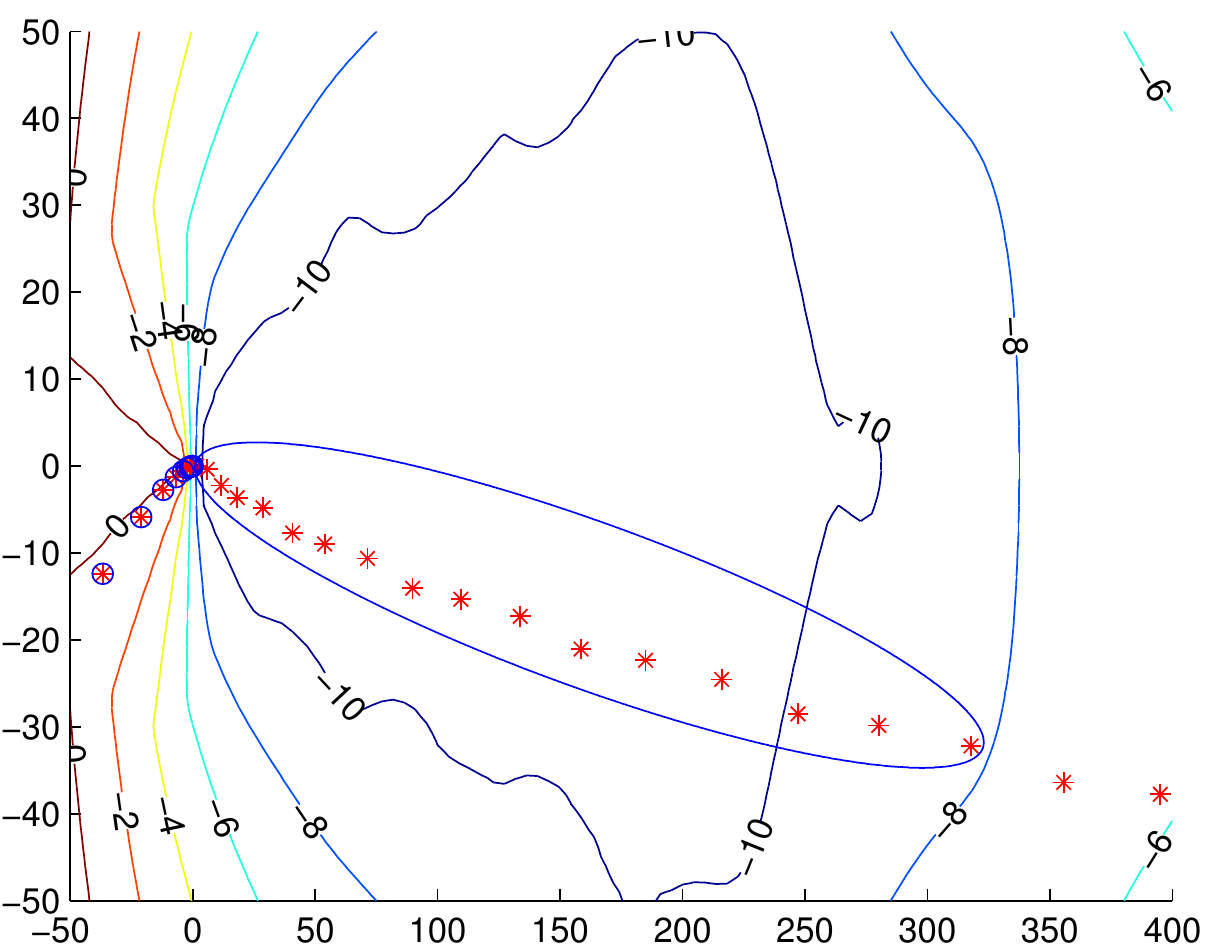}
\includegraphics[width=0.4\textwidth]{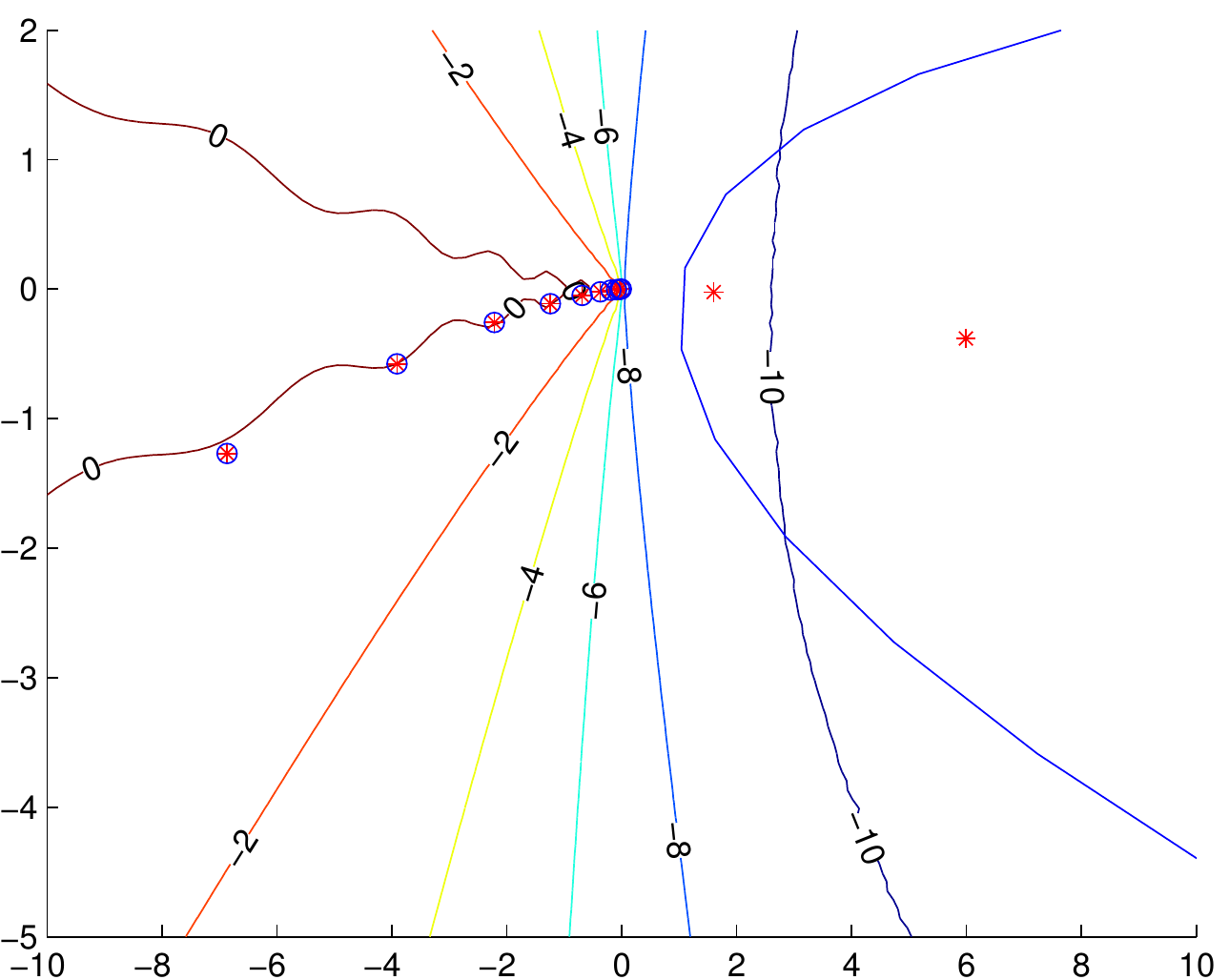}
\end{center}
\caption{Computed eigenvalues for $V_0 = 5$, 
$(a,b) = (2,3)$, 
using
  20 poles in a Zolotarev square root approximation optimal on
  $[0.1,500]$, and 
  Chebyshev meshes of size 40 on $(0,a)$ and $(a,b)$. 
  Circled eigenvalues
  satisfy $\|T(\lambda)\| > 10^{-8}$. Contour plots of
  $\log_{10}(\|T(z)-\hat{T}(z)\|)$  and an ellipse on which
  the smallest singular value of $T(z)$ is greater than $10^{-8}$
  (left). A closer view (right).}
\label{fig:err}
\end{figure}

\begin{table}
\centering
\small
\begin{tabular}{l|l||l|l}
    Eigenvalue & Error & Eigenvalue & Error \\ \hline
    $483.76 -44.65i$ & $1.34 \times 10^{-5}$ &
$439.47 -40.27i$ & $2.39 \times 10^{-6}$ \\
$395.11 -37.76i$ & $4.56 \times 10^{-7}$ &
$355.60 -36.42i$ & $5.67 \times 10^{-8}$ \\
$317.83 -32.22i$ & $7.19 \times 10^{-9}$ &
$280.15 -29.85i$ & $8.45 \times 10^{-10}$ \\
$247.21 -28.52i$ & $1.23 \times 10^{-10}$ &
$215.94 -24.54i$ & $7.10 \times 10^{-11}$ \\
$184.96 -22.34i$ & $9.90 \times 10^{-11}$ &
$158.59 -21.04i$ & $8.45 \times 10^{-11}$ \\
$133.80 -17.31i$ & $4.61 \times 10^{-11}$ &
$109.55 -15.33i$ & $2.20 \times 10^{-11}$ \\
$89.76 -14.06i$  & $5.61 \times 10^{-12}$ &
$71.41 -10.65i$  & $1.60 \times 10^{-11}$ \\
$53.94 -8.99i$   & $2.08 \times 10^{-11}$ &
$40.77 -7.72i$   & $1.97 \times 10^{-11}$ \\
$28.79 -4.80i$   & $5.84 \times 10^{-12}$ &
$18.24 -3.63i$   & $2.04 \times 10^{-12}$ \\
$11.78 -2.23i$   & $3.22 \times 10^{-12}$ &
$5.99 -0.38i$    & $4.75 \times 10^{-13}$ \\
$1.60 -0.02i$    & $1.46 \times 10^{-13}$ \\

\end{tabular}
\caption{Error bounds for computed resonances}
\label{tbl:error-bounds}
\end{table}
\section{Conclusion}
\label{sec-conclusion}

In this paper, we have described several 
localization theorems for the spectrum
of a regular analytic function 
$T : \Omega \rightarrow \bbC^{n \times n}$.  These pseudospectral and
Gershgorin inclusion results generalize well-known perturbation
theory for the standard eigenvalue problem.  We have also shown
through several examples how these results
are practical tools
to localize the 
spectrum, count eigenvalues in parts of
the complex plane, and judge which eigenvalues from an approximating
eigenvalue problem are accurate approximations of true eigenvalues and
which are spurious.

\section*{Acknowledgments}

The authors would like to thank the anonymous referees for their
comments, and in particular for the suggestion to use Lambert's $W$
function in the analysis of the Hadeler and time delay examples.

\bibliographystyle{siam}
\bibliography{refs}
 
\end{document}